\documentclass{amsart}

\usepackage{mathtools, amssymb, hyperref}
\newtheorem{theorem}{Theorem}[section]
\newtheorem{lemma}[theorem]{Lemma}

\theoremstyle{definition}
\newtheorem{definition}[theorem]{Definition}
\newtheorem{example}[theorem]{Example}

\newtheorem{proposition}[theorem]{Proposition}
\newtheorem{corollary}[theorem]{Corollary}

\theoremstyle{remark}
\newtheorem{remark}[theorem]{Remark}

\numberwithin{equation}{section}

\begin{document}

\title[Parseval identities for Gowers norms]{Parseval-type identities for  Gowers \\ uniformity norms in finite abelian groups}

\author{Martin Niepel}
\address{Faculty of Mathematics, Physics, and Informatics, Comenius University, \newline 842 48 Bratislava, Slovakia}
\email{mniepel@fmph.uniba.sk}

\subjclass{11B30, 43A25} % consider also  22E99, 43A99 as secondary ???

\date{July 14, 2025} % and, in revised form, January xx, 202x.}

\keywords{Parseval's theorem, Gowers uniformity norms}

\begin{abstract}
Orthogonality relations for cubes of characters in Gowers inner products $\langle \cdot \rangle_{d,l}$ lead to
Parseval-type identities and isometries for suitably generalized Gowers uniformity norms $U^{d,l}$.  
\end{abstract}

% As a corollary, some new equalities for corner convolutions are obtained.
% applications and estimates?

\maketitle

\section{Introduction}
\label{sec:intro}

In \cite{Gow01}, Gowers introduced uniformity norms $U^d(G)$ for complex-valued functions on the group
$G=\mathbb{Z}/N\mathbb{Z}$ as part of his new proof of Szemer\'edi's theorem. This concept has led to numerous breakthroughs in additive combinatorics. A similar and independent approach by Host and Kra, \cite{HoKr04}, \cite{HoKr05}, \cite{HoKr08}, 
has also been influential in ergodic theory. The statement and subsequent proof of the inverse theorem for Gowers norms appeared in \cite{GrTa08} and \cite{GrTaZi12}. 

We would also like to direct the reader to an alternative treatment of higher-order Fourier theory through the framework of nilspaces, as developed by Camarena and Szegedy; see \cite{CamSze10}, \cite{Szeg12}, \cite{Can16}, and further references therein. 

In the present paper, we describe variants of uniformity norms $U^{d,l}$ of order $d$ and degree $l$ in the space $\mathbb{C}^G$ of complex-valued functions on a finite abelian group $G$ (or on  $\mathbb{C}^{\widehat{G}}$) and the related $2^d$-ary inner products $\langle \cdot \rangle_{d, l}$. We prove that these norms satisfy Parseval-type identities when evaluated on a cube of functions $F: G^{\{0,1\}^d} \to \mathbb{C}$ and its Fourier transform $\widehat{F}: \widehat{G}^{\{0,1\}^d} \to \mathbb{C}$, with $F=\left( f_\omega\right)_{\omega \in \{0,1\}^d}$, $\widehat{F}=\left( \widehat{f}_\omega\right)_{\omega \in \{0,1\}^d}$, and $\widehat{(f_\omega)} = \widehat{f}_\omega$ for all $\omega \in \{0,1\}^d$:
\begin{equation}\label{ParsId}
\left\langle F \right\rangle_{d,l} =  \left\langle \widehat{F}
\right\rangle_{d,d-l-1} .
\end{equation}

The classical Parseval's theorem for finite abelian groups, which states that the Fourier transform is unitary, 
turns out to be a special case with the choice of parameters $d=1$ and $l=0$. Similarly, the known relation between the $U^2(G)$ norm of a function $f$ and the $\ell_4(\widehat{G})$ norm of its Fourier transform $\widehat{f}$ corresponds to the case $d=2$ and $l=1$. We obtain the first new and interesting result for $d=3$ and $l=1$, which served as the starting point for this line of inquiry. In this case, the $U^3(G)$ norm of $f$ and the $U^3(\widehat{G})$ norm of $\widehat{f}$ are equal:
\begin{equation} \|f\|_{U^3(G)} = \|\widehat{f}\|_{U^3(\widehat{G})}.
\end{equation}

Later, we will show that relation \eqref{ParsId} is a special instance of a more general higher-order Poisson formula \eqref{Poisson}. We will also establish some elementary inequalities between higher-degree norms and present some equalities for corner convolutions and their Fourier transforms. 

The paper is organized as follows. In Section \ref{sec:HDC}, we introduce the notation and define higher degree cubes $\mathcal{P}^{d,l}(G)$ in $G^{\{0,1\}^d}$. We explore their structure in detail and deduce factorizations \eqref{Pdecomp} and \eqref{Pdecomp2}. In Section \ref{sec:norms}, we generalize the concept of multiplicative derivative to higher degrees and show how it splits in Proposition \ref{prop:dersplit}. As a consequence, we establish the desired properties of $2^d$-ary inner products $\langle \cdot \rangle_{d, l}$.

In Section \ref{sec:orto-fin}, we explore the higher order orthogonality of characters and consequently prove Identity \eqref{ParsId}. In Section \ref{sec:Poisson}, we outline an alternative approach to higher order Parseval's identity using higher order Poisson summation formula \eqref{Poisson} and apply it to corner convolutions \eqref{Corner}.
 
{\bf Acknowledgement (Funding):} 
The author was supported by the project VEGA 1/0596/21 from the Ministry of Education of Slovakia. 

%This research received no specific grant from any funding agency in the public, commercial or not-for-profit sectors.

\section{Higher Degree Cubes}
\label{sec:HDC}

There are various notation conventions used in the literature for dealing with cubes, Host-Kra groups, and nilspaces. We will primarily follow the conventions outlined in \cite{Tao12}, specifically Chapters 1.3 and 2.2, but will adjust slightly when necessary.

Let $G$ be a finite abelian group and $\widehat{G}$ its Pontryagin dual group of additive characters. The space of complex-valued functions on $G$ will be denoted by $\mathbb{C}^G$. Equipped with the standard Hermitian product
$$ \langle f, g \rangle = \mathbb{E}_{x \in G} f(x)\overline{ g(x)},$$ 
it forms a finite-dimensional Hilbert space $L^2(G)$, where the characters in $\widehat{G}$ provide an orthonormal basis. While we assume throughout the paper that $G$ is finite, most definitions and results also extend to compact abelian Lie groups (e.g., $\mathbb{R}/\mathbb{Z}$).

A finite set $A$ with $d$ elements, typically $A=\{1, 2, \dots, d\}$, will be used as a label set. Given a linear order on $A$, elements in a $d$-dimensional binary cube $\{0,1\}^A$ can be written in a standard manner, for example, $ \omega = \omega_1 \omega_2  \dots \omega_d$ for $A=\{1,2,\dots,d\}$.  The \textit{weight} of $\omega \in \{0,1\}^A$, denoted by $|\omega|$, is the sum $\sum_{i\in A} \omega_i$.  

For every $i \in A$, there is a natural identification between $\{0,1\}^A$ and $\{0,1\}^{A \setminus \{ i\} } \times \{0,1\}^{\{i\} }$, where $\omega$ corresponds to a pair of restrictions $(\omega_{|A\setminus \{i\}}, \omega_{|\{i\}}) $. Alternatively, we will use the slightly shorter notation $\omega=(\omega', \omega_i)$. The projection onto the $i$-th coordinate, denoted by $\pi_i : \{0,1\}^ A \to \{0,1\}^{\{i\}}$, sends $\omega$ to $\omega_i$, while the complementary projection $\bar{\pi}_i : \{0,1\}^A \to \{0,1\}^{A \setminus \{i\}}$ forgets the $i$-th coordinate and satisfies $\bar \pi_i (\omega) = \omega'$. Additionally, we have two inclusions $s_i^0$ and $s_i^1$ of a $(d-1)$-dimensional cube $\{0,1\}^{A \setminus \{i\}}$ into $\{0,1\}^A$ with constant $i$-th coordinate. Specifically, $s_i^a : \{0,1\}^{A \setminus \{i\}} \to \{0,1\}^{A} $ sends $\omega'$ to $(\omega',a_i)$, where $a_i \in \{0,1\}$. The image of $s_i^a$ is the preimage $\pi_i^{-1}(a_i)$. 
This is equivalent to requiring $\bar \pi_i \circ s_i^a = \mathrm{id}_{\{0,1\}^{A \setminus \{i\}}}$ and $\pi_i \circ s_i^a \equiv a$ for $a \in \{0,1\}^{\{i\}}$.

Similarly, for any $B\subseteq A$ with $|B|=k$, we can write $\omega = (\omega_{|A\setminus B}, \omega_{|B}) \in  \{0,1\}^{A \setminus B} \times \{0,1\}^{B}$. We have projections $\pi_B : \{0,1\}^ A \to \{0,1\}^{B}$ and $\bar{\pi}_B : \{0,1\}^A \to \{0,1\}^{A \setminus B}$, as well as inclusions $s_{B}^{\omega_{|B}}: \{0,1\}^{A\setminus B} \to \{0,1\}^A$, where $\omega_{|B} \in \{0,1\}^B$. The inclusion $s_{B}^{\omega_{|B}}$ maps the $(d-k)$-dimensional cube $\{0,1\}^{A \setminus B}$ to the $(d-k)$-dimensional face $\pi_B^{-1}(\omega_{|B})$ of $\{0,1\}^{A}$, where the $B$-coordinates are constant and equal to $\omega_{|B} \in \{0,1\}^B$. These conditions are equivalent to requiring $\bar \pi_B \circ s_{B}^{\omega_{|B}} = \mathrm{id}_{\{0,1\}^{A \setminus B}}$ and $\pi_B \circ s_B^{\omega_{|B}} \equiv \omega_{|B}$.

We define an \textit{$A$-labeled $d$-dimensional cube in $G$}, or an \textit{$A$-cube in $G$}, as a point $\left( x_\omega \right)_{\omega \in \{0,1\}^A} \in G^{\{0,1\}^A}$. Alternatively, it can be viewed as the image $p(\{0,1\}^A)$ of the abstract binary cube $\{0,1\}^A$ under a map $p: \{0,1\}^A \to G$. In the rest of the paper, we will use both notations --- a point $(x_\omega)$ and a map $p$ --- interchangeably.

The group operation from $G$ can be lifted pointwise to $G^{\{0,1\}^A}$. Consequently, the set of $A$-cubes in $G$ forms an abelian group isomorphic to $G^{2^d}$. 

For every $i \in A$, we can split the $2^d$-tuple $p \in G^{\{0,1\}^A}$ into a pair of two $2^{d-1}$-tuples $(p_{0_i},p_{1_i})= (p \circ s^0_i,p\circ s^1_i) \in G^{\{0,1\}^{A \setminus \{i\} }} \times G^{\{0,1\}^{A \setminus \{i\} }}$. Next, we 
define a boundary operator $\delta_i : G^{\{0,1\}^A} \to G^{\{0,1\}^{A \setminus \{i\} }}$ from the space of $A$-cubes in $G$ to the space of $A \backslash \{i\}$-cubes in $G$ by
\begin{equation}
    \delta_i p  \coloneqq p \circ s_i^0 - p \circ s_i^1.
\end{equation}
This boundary operator assigns differences in the $i$-th direction to the $2^{d-1}$ edges (1-dimensional faces) of the cube $\{0,1\}^{A}$ along that direction. Specifically, for $\omega' \in \{0,1\}^{A \setminus \{i\}}$ we have
\begin{equation*}
    (\delta_i p )(\omega') = p(s_i^0(\omega')) - p (s_i^1 (\omega')) = x_{(\omega',0)} - x_{(\omega',1)}.
\end{equation*}
For the boundary operators $\delta_i : G^{\{0,1\}^A} \to G^{\{0,1\}^{A \setminus \{i\} }}$, $\delta_j : G^{\{0,1\}^A} \to G^{\{0,1\}^{A \setminus \{j\} }}$, $\delta_i' : G^{\{0,1\}^{A\setminus \{j\}}} \to G^{\{0,1\}^{A \setminus \{i,j\} }}$ and $\delta_j' : G^{\{0,1\}^{A\setminus \{i\}}} \to G^{\{0,1\}^{A \setminus \{i,j\} }}$, it is straightforward to check that the compositions $\delta_j' \circ \delta_i$ and $\delta_i'\circ \delta_j$ are equal.  

Hence, for any $B=\{i_1, i_2, \dots , i_k\} \subseteq A$, we can define a boundary operator $\delta_B : G^{\{0,1\}^A} \to G^{\{0,1\}^{A \setminus B}}$ as 
\begin{equation*}
\delta_B \coloneqq \delta_{i_k}^{(k-1)} \circ  \dots  \circ \delta_{i_2}'\circ \delta_{i_1}
\end{equation*}
This definition is independent of the order in which the boundary operators are composed. 

Another way to interpret the maps $\delta_B$ is that they represent alternating sums along Gray codes over the $k$-dimensional faces $\pi_{A \setminus B}^{-1}(\omega_{|A\setminus B})=s_{A \setminus B}^{\omega_{|A\setminus B}} \left(\{0,1\}^{B}\right)$ of the cube $\{0,1\}^A$. Specifically, for any $p \in G^{\{0,1\}^A}$ and $\omega_{|A\setminus B} \in \{0,1\}^{A \setminus B}$, we have
\begin{equation}
\label{gray}
    (\delta_B p)(\omega_{|A\setminus B}) = (-1)^{\left|\omega_{|A\setminus B}\right|} \sum_{\omega_{|B} \in \{0,1\}^{B}\ } (-1)^{\left|\omega_{|A\setminus B}\right|+ \left|\omega_{|B}\right|}x_{(\omega_{|A\setminus B}, \omega_{|B})}.
\end{equation}

Note also that the map $\delta_B$ is a group homomorphism from $G^{\{0,1\}^A}$ to $G^{\{0,1\}^{A\setminus B}}$. Consequently, its kernel is an abelian subgroup of $G^{\{0,1\}^A}$. This leads to the following definition.

\begin{definition}
\label{def:HDcubes} For $l = -1, \dots, d-1$ define the space of \textit{$A$-labeled $d$-dimensional degree-$l$ cubes in $G$}  as  
$$ \mathcal{P}^{A,l}(G) \coloneqq \bigcap_{\substack{B \subseteq A \\  |B|=l+1}} \ker \delta_B.$$
For $l \ge|A|$, set
$$ \mathcal{P}^{A,l}(G) \coloneqq G^{\{0,1\}^A}.$$
\end{definition}

\begin{example}
Using this notation, any (non-zero) constant map $p=(x)_{\omega \in \{0,1\}^A}$ has degree $0$, and the unique cube of degree $l=-1$ is the zero point $(0)_{\omega \in \{0,1\}^A}$. 

Constant maps from $\{0,1\}$ to $G$ are related to the standard Hermitian product in $\mathbb{C}^G$, as they represent the diagonal $D=\{(x,x)\in G^2\}=\mathcal{P}^{1,0}(G)$. Similarly, higher-dimensional diagonals $\mathcal{P}^{d,0}$ in $G^{2^d}$ play a role in the definitions of standard $L^{2^d}(G)$ norms.

Affine-linear maps from $\{0,1\}^d$ to $G$ -- parallelepipeds of dimension $d$, which are essential in the definition of the Gowers norm $U^d$ -- form the space $\mathcal{P}^{d,1}(G)$ of $d$-cubes in $G$ of degree $1$. 
\end{example}

From the construction, we see that a higher degree cube  $ \mathcal{P}^{A,l}(G)$ is symmetric with respect to the symmetry group $S_A \ltimes (\mathbb{Z}/2\mathbb{Z})^A$ of the cube $\{0,1\}^A$. We will not introduce any special notation for the action of this symmetry group and will instead use the symmetries implicitly in later computations.

It follows directly from the definition that the degree induces a filtration on the space of all $A$-cubes in $G$:
\begin{equation}
\label{Pfilter}    
\{ (0)_{\omega \in \{0,1\}^A}\}=\mathcal{P}^{A,-1}( G) \subset \mathcal{P}^{A,0}( G) \subset \cdots \subset \mathcal{P}^{A,d}(G) = G^{\{0,1\}^A}.\end{equation}
Moreover, for any $B \subseteq A$ we have two types of  maps  from  $G^{\{0,1\}^A}$ to  $G^{\{0,1\}^{A \setminus B}}$. The first map, $C_{s_{B}^{\omega_{|B}}}$, is a composition operator given by the inclusion $ s_{B}^{\omega_{|B}}$ of $G^{\{0,1\}^{A \setminus B}}$ into $G^{\{0,1\}^A}$. This operator restricts a $d$-dimensional $A$-cube in $G$ to its $(d-k)$-dimensional face $\pi_{B}^{-1}(\omega_{| B}) = s_{B}^{\omega_{|B}} \left( \{0,1\}^{A\setminus B}\right)$ in the $A\setminus B$-direction. The second map, $\delta_B$, assigns alternating Gray sums along faces in the $B$-direction. Both preserve the filtration: the former keeps the degrees unchanged, while the latter shifts them down by $|B|=k$. 

\begin{proposition}
\label{prop:dBshift}
For a subset $B$ of the label set $A$ and $\omega_{|B} \in \{0,1\}^B$, we have 
\renewcommand{\labelenumi}{\roman{enumi})}
\begin{enumerate}
    
\item $ C_{s_{B}^{\omega_{|B}}} \left(\mathcal{P}^{A,l}(G)\right) = \mathcal{P}^{A\setminus B,l}(G)$,

\item $ \delta_B \left(\mathcal{P}^{A,l}(G)\right) = \mathcal{P}^{A\setminus B,l -|B|}(G)$.
\end{enumerate}
\end{proposition}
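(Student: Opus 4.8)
The plan is to prove both statements by rewriting membership in $\mathcal{P}^{A,l}(G)$ as the vanishing of boundary operators $\delta_{B'}$, and using two elementary facts, each a consequence of the Gray-sum formula \eqref{gray} and the order-independence of composed boundary operators: (a) if $B\cap B'=\emptyset$ then $\delta_{B'}$ and the face restriction $C_{s_B^{\omega_{|B}}}$ act on disjoint coordinate directions, hence commute, and moreover $\delta_{B'}\circ\delta_B=\delta_{B\cup B'}$; and (b) a cube of degree strictly smaller than $|S|$ is annihilated by $\delta_S$ (pick $S''\subseteq S$ with $|S''|$ one larger than the degree and write $\delta_S=\delta_{S\setminus S''}\circ\delta_{S''}$). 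The degenerate ranges --- where one of the spaces is all of $G^{\{0,1\}^{\bullet}}$, or where $l<|B|$ so that in (ii) both sides are the zero cube --- I would dispose of directly using the filtration \eqref{Pfilter}, surjectivity of each $\delta_i$, and fact (b); the argument below is for the remaining range.

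For (i): if $p\in\mathcal{P}^{A,l}(G)$ and $B'\subseteq A\setminus B$ has $|B'|=l+1$, then by (a) $\delta_{B'}(p\circ s_B^{\omega_{|B}})=(\delta_{B'}p)\circ s_B^{\omega_{|B}}=0$, giving $C_{s_B^{\omega_{|B}}}(\mathcal{P}^{A,l}(G))\subseteq\mathcal{P}^{A\setminus B,l}(G)$. For the reverse inclusion I would take $q\in\mathcal{P}^{A\setminus B,l}(G)$ and form its pullback $p\coloneqq q\circ\bar\pi_B$, which is constant in every $B$-direction; then $\delta_{B'}p=0$ whenever $B'\cap B\ne\emptyset$, while for $B'\subseteq A\setminus B$ with $|B'|=l+1$ the cube $\delta_{B'}p$ is obtained from $\delta_{B'}q=0$ by inserting constant $B$-coordinates, so it vanishes. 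Thus $p\in\mathcal{P}^{A,l}(G)$, and $C_{s_B^{\omega_{|B}}}(p)=q\circ\bar\pi_B\circ s_B^{\omega_{|B}}=q$ by $\bar\pi_B\circ s_B^{\omega_{|B}}=\mathrm{id}$.

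For (ii): if $p\in\mathcal{P}^{A,l}(G)$ and $B'\subseteq A\setminus B$ has $|B'|=(l-|B|)+1$, then $\delta_{B'}(\delta_B p)=\delta_{B\cup B'}p=0$ since $|B\cup B'|=l+1$, which gives the inclusion $\delta_B(\mathcal{P}^{A,l}(G))\subseteq\mathcal{P}^{A\setminus B,l-|B|}(G)$. For surjectivity --- the crux --- I would use the factorization of $\delta_B$ into single-direction operators and induct on $|B|$, reducing to $B=\{i\}$. Given $q\in\mathcal{P}^{A\setminus\{i\},l-1}(G)$, define $p$ on opposite faces by $p\circ s_i^0=q$ and $p\circ s_i^1=0$, so $\delta_i p=q$ is automatic; it remains to verify $\delta_{B'}p=0$ for every $B'\subseteq A$ with $|B'|=l+1$. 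If $i\in B'$, then $\delta_{B'}p=\delta_{B'\setminus\{i\}}(\delta_i p)=\delta_{B'\setminus\{i\}}q=0$, because $|B'\setminus\{i\}|=l=(l-1)+1=\deg q+1$. If $i\notin B'$, then $\delta_{B'}p$ has both of its $i$-faces zero, since $(\delta_{B'}p)\circ s_i^0=\delta_{B'}q=0$ by fact (b) (as $|B'|=l+1>l-1=\deg q$) and $(\delta_{B'}p)\circ s_i^1=0$; hence $\delta_{B'}p=0$.

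The one genuine obstacle is this surjectivity in (ii): one must exhibit a preimage under $\delta_B$ whose degree is still controlled, and the economical ``put $q$ on one face and $0$ on the other'' choice must be checked to satisfy every kernel condition defining $\mathcal{P}^{A,l}(G)$ --- this is exactly where the case split $i\in B'$ versus $i\notin B'$, together with facts (a) and (b), is needed. Everything else, including the degenerate ranges and the verification that the iterated single-direction boundary operators compose to $\delta_B$ with the asserted images, is routine bookkeeping.
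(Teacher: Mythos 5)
Your proposal is correct and follows essentially the same route as the paper: the forward inclusions via commutation of face restrictions with boundary operators and the composition rule $\delta_{B\cup B'}=\delta_{B'}\circ\delta_B$, the reverse inclusion in (i) via the pullback $q\circ\bar\pi_B$, and the surjectivity in (ii) via the witness that places $q$ on one face and $0$ on the complementary faces (your induction on $|B|$ produces exactly the paper's cube $(p',0,\dots,0)$, and your case split $i\in B'$ versus $i\notin B'$ matches the paper's factorization $\delta_D=\delta_{D\cap B}\circ\delta_{D\setminus B}$). No gaps.
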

\begin{proof} 
i) Let $C \subseteq A\setminus B$ with $|C|=l+1$. Since $C$ is a subset of both $A$ and $A\setminus B$, we need to distinguish between the map $\delta_C$  from $G^{\{0,1\}^A}$ to $G^{\{0,1\}^{A \setminus C}}$ and the map $\delta_C'$ from $G^{\{0,1\}^{A\setminus B}}$ to $G^{\{0,1\}^{A \setminus (B \cup C)}}$. 
For $p \in \mathcal{P}^{A,l}(G)$, we have $p \in \ker \delta_C$ by definition.  The operator $C_{s_{B}^{\omega_{|B}}}$ acts on $p \in G^{\{0,1\}^A}$ by composition:
$$C_{s_{B}^{\omega_{|B}}}(p) = p \circ s_{B}^{\omega_{|B}}.$$
Thus, $ \delta_C' (p \circ s_{B}^{\omega_{|B}}) = (\delta_C p )\circ {s'}_{B}^{\omega_{|B}} $  is a zero map from $G^{\{0,1\}^{A \setminus (B \cup C)}}$
to $G$, where ${s'}_{B}^{\omega_{|B}}$ denotes the inclusion of an ${A \setminus (B \cup C)}$-cube into an ${A \setminus C}$-cube with constant $B$-coordinates equal to $\omega_{|B}$.

Therefore, we have an inclusion $C_{s_{B}^{\omega_{|B}}} \left(\mathcal{P}^{A,l}(G)\right) \subseteq \mathcal{P}^{A\setminus B,l}(G)$.

For $p' \in \mathcal{P}^{A\setminus B,l}(G)$ consider the pullback $p=\bar \pi_B^*(p')$, that is, the $B$-cube $(p')_{\omega_{|B} \in \{0,1\}^B}$ that consists of $2^{|B|}$ copies of $p'$. Then, clearly, we obtain $p'$ as a restriction $p \circ s_{B}^{\omega_{|B}}$ of $p$. Since $\delta_i p = 0$ for any $i \in B$, it suffices to check $\delta_C$ for $C \subseteq A \setminus B$ with $|C|=l+1$. Then we have:
$\delta_C(p)=(\delta'_C p')_{\omega_{|B} \in \{0,1\}^B} = (0)_{\omega_{|B} \in \{0,1\}^B}$. This means that $p \in \mathcal{P}^{A,l}(G)$, and we have established the identity: $ C_{s_{B}^{\omega_{|B}}} \left(\mathcal{P}^{A,l}(G)\right) = \mathcal{P}^{A\setminus B,l}(G)$.

ii) By definition, on the right-hand side we have:
$$ \mathcal{P}^{A\setminus B,l -|B|}(G) =  \bigcap_{\substack{C \subseteq A \setminus B \\  |C|=l-|B| +1}} \ker \delta_C.$$
For any $C \subseteq A\setminus B$ the sets $B$ and $C$ are disjoint. Thus, $|B \cup C|=l +1$, and we have $\delta_{B \cup C} = \delta_C \circ \delta_B$. Consequently, we have $\delta_B(\ker \delta_C \circ \delta_B) \subseteq \ker \delta_C$.
Therefore 
$$\delta_B \left(\mathcal{P}^{A,l}(G)\right) = \delta_B \left(  \bigcap_{\substack{D \subseteq A \\  |D|=l +1}} \ker \delta_{D} \right) \subseteq  \delta_B \left( \bigcap_{\substack{C \subseteq A \setminus B \\  |C|=l-|B| +1}} \ker \delta_{B \cup C} \right) \subseteq$$
$$\subseteq \bigcap_{\substack{C \subseteq A \setminus B \\  |C|=l-|B| +1}} \delta_B(\ker \delta_C \circ \delta_B) 
\subseteq \bigcap_{\substack{C \subseteq A \setminus B \\  |C|=l-|B| +1}} \ker \delta_C =  \mathcal{P}^{A\setminus B,l -|B|}(G).$$
To verify the reverse inclusion, take any $p' \in  \mathcal{P}^{A\setminus B,l -|B|}(G)$. We extend $p'$  by $2^{|B|}-1$ zero maps to obtain an  $A$-cube $p=(p', 0, \dots, 0)$. Using equation \eqref{gray}, we have $\delta_B p = p'$. We need to check that  $p \in \mathcal{P}^{A,l}(G)$. For every $D\subseteq A$ with $|D|=l+1$, we have $\delta_D= \delta_{D \cap B}\circ\delta_{D\setminus B}$.  Since $|D\setminus B|\ge l - |B| + 1$, $p' \in \ker \delta_{D\setminus B}$. Hence $\delta_{D\setminus B} p' = 0 $. Thus, $p \in \mathcal{P}^{A, l}(G)$ as it satisfies $ \delta_D p = 0$ for all $D \subseteq A$ with $|D| = l + 1$.
This proves that:
\[
\delta_B \left(\mathcal{P}^{A, l}(G)\right) = \mathcal{P}^{A \setminus B, l - |B|}(G).
\]

\end{proof}

There is more to be said about the structure of higher degree cubes in $G$.
Since $\ker \delta_i$ consists of $A$-cubes that are constant in the $i$-th direction, these cubes can be expressed as $(p_{1_i},p_{1_i})$, where $p_{1_i}$ is some $A\backslash \{i\}$-cube in $G$. This identification corresponds to the pullback ${\bar \pi_i}^* : G^{\{0,1\}^{A\setminus\{i\}}} \to G^{\{0,1\}^A}$, and the composition operator $C_{s^1_i}$ maps $(p_{1_i},p_{1_i})$ back to  $p_{1_i}$ . Therefore, by Proposition \ref{prop:dBshift}, for every degree $l$  we obtain a short exact sequence:

$$ 0 \longrightarrow \mathcal{P}^{A\setminus \{i\},l}(G) \xrightarrow{\phantom{i}{\bar \pi_i}^*\phantom{i}}  \mathcal{P}^{A,l}(G) \xrightarrow{\phantom{i}\delta_i\phantom{i}}  \mathcal{P}^{A\setminus \{i\},l-1}(G) \longrightarrow 0.$$

This exact sequence splits, leading to the isomorphism: 

\begin{equation}
\label{Pdecomp}
    \mathcal{P}^{A,l}(G) \cong  \mathcal{P}^{A\setminus \{i\},l}(G) \times \mathcal{P}^{A\setminus \{i\},l-1}(G).
\end{equation}

Hence, every degree-$l$ $A$-cube $p$ in $G$ can be uniquely reconstructed from a pair $[q_1,q_2]$ of two $A\backslash \{i\}$-cubes in $G$, one of degree $l$ and the second of degree $l-1$, using the maps   
$$ p = (p_{0_i},p_{1_i}) \mapsto [p \circ s_i^1 , \delta_i p] = [p_{1_i},p_{0_i}-p_{1_i}]= [q_1,q_2]$$ 
and 
$$ p(\omega', \omega_i) = \begin{cases} q_1(\omega')+ q_2(\omega'), &\quad \omega_i=0, \\q_1(\omega'), &\quad \omega_i=1. \end{cases}$$

At this point, we would like to note that every degree-$l$ $A$-cube in $G$ is uniquely determined by any of its $l$-dimensional corners. By considering the corner centered at $(0 \dots 0) \in \{0,1\}^A$, which consists of vertices $\omega$ with weight less than or equal to $l$, we
can uniquely assign the value $x_{\tilde\omega}$  to a vertex $\tilde\omega$ with $|\tilde\omega|=l+1$ so that it satisfies the Gray code property \eqref{gray} along the $l+1$-dimensional face $\left\{ \omega | \omega \preceq_A \tilde\omega\right\}$ (here, $\preceq_A$ denotes the coordinate-wise partial order on $\{0,1\}^A$ induced by $0_i \preceq_i 1_i$). Inductively, we can continue with vertices of weight $l+2$, and so on.  
Hence, we conclude that $\mathcal{P}^{A,l}(G)$ is isomorphic to $G^k$, where $k = \sum_{i=0}^l \binom{d}{i} $. 

There are two useful consequences of this observation. Firstly, the factor group $\mathcal{P}^{A,l}(G)/\mathcal{P}^{A,l-1}(G)$ can be identified with the space 
\begin{equation}
\label{DiffP(A,l)}
    \mathcal{D}_{l-1}\mathcal{P}^{A,l}(G) \coloneqq \left\{ (x_\omega) \in \mathcal{P}^{A,l}(G) \, | \, x_\omega = 0 \text{ for } |\omega| \le l-1 \right\},
\end{equation}
consisting of those degree-$l$ cubes whose values vanish on the $(l-1)$-dimensional corner centered at $(0 \dots 0)$. Such points are uniquely determined  by  $ \binom{d}{l} $ values $x_\omega$ with $|\omega| = l$. 

This leads to an isomorphism 

\begin{equation}
\label{Pdecomp2}
    \mathcal{P}^{A,l}(G) \cong  \mathcal{P}^{A\setminus \{i\},l-1}(G) \times \mathcal{P}^{A\setminus \{i\},l-1}(G) \times \mathcal{D}_{l-1}\mathcal{P}^{A \setminus \{i\},l}(G),
\end{equation}
given by  $p = (p_{0_i},p_{1_i}) \mapsto [r_{0_i}, r_{1_i}, r_{d_i}]$. Here, $r_{0_i}$ and $r_{1_i}$ are unique degree-$(l-1)$ $A \backslash \{i\} $-cubes in $G$ determined by $(l-1)$-dimensional corners in $(d-1)$-dimensional faces $\pi_i^{-1}(0_i)$ and $\pi_i^{-1}(1_i)$  of $\{0,1\}^A$ centered at $(0_i0 \dots 0)$ and $(1_i0 \dots 0)$, respectively.
Hence,  $r_{0_i}$ and $r_{1_i}$ are the unique solutions in $\mathcal{P}^{A\setminus \{i\},l-1}(G)$ satisfying
$$ p_{a_i} (\omega', a_i) = r_{a_i}(\omega') \qquad \text{ for } \quad \omega' \in \{0,1\}^{A\setminus \{i\}}, \; |\omega'| \le l-1, \; a_i = 0_i,1_i. $$
The remainder $r_{d_i}\in \mathcal{D}_{l-1}\mathcal{P}^{A \setminus \{i\},l}(G)$ is uniquely determined by  
$$ r_{d_i}(\omega') = \begin{cases} 0   &\qquad   |\omega'| \le l-1,\\
p_{0_i} (\omega', 0_i) - r_{0_i}(\omega') &\qquad   |\omega'| = l. 
\end{cases}$$
By construction, the pair $(r_{d_i},r_{d_i})$ is the unique element in $\mathcal{D}_{l-1}\mathcal{P}^{A,l}(G)$ vanishing on all vertices  $(\omega',\omega_i)$ with $|\omega'|\le l-1$, $\omega_i \in \{0,1\}^{\{i\}}$ and matching $r_{d_i}(\omega')$ on all wertices 
 $(\omega', 0_i)$ for $|\omega'|=l$. Moreover, it satisfies  $r_{d_i}(\omega')= p_{1_i}(\omega', 0_i) - r_{1_i}(\omega')$ as well. Hence, 
\begin{equation}
\label{cornersplit}(p_{0_i},p_{1_i}) = (r_{0_i}+r_{d_i},r_{1_i}+r_{d_i}).
\end{equation} 
This decomposition separates the contributions from the two degree-$(l-1)$ cubes $r_{0_i}$, $r_{1_i}$ and the ``corner defect'' $r_{d_i}$, which will be essential in the proof of the Cauchy-Schwarz-Gowers inequality in Proposition \ref{CSG}.

The second consequence is related to the question of choosing a measure on $\mathcal{P}^{A,l}(G)$, especially when the group $G$ in question is an infinite compact group. In that case, we can use the Haar measure  on $\mathcal{P}^{A,l}(G)$ induced by the isomorphism $\mathcal{P}^{A,l}(G) \cong G^k$, for averaging over the space of degree-$l$ $A$-cubes in $G$.

\begin{remark}
    In the same spirit as $\eqref{DiffP(A,l)}$, the inclusion $\mathcal{P}^{A,l'}(G) \subseteq \mathcal{P}^{A,l}(G)$ for $l'\le l$ induces a splitting  
    $$\mathcal{P}^{A,l}(G) =  \mathcal{P}^{A,l'}(G) \times \mathcal{D}_{l'}\mathcal{P}^{A,l}(G),$$
    where 
    \begin{equation}
\label{Diff2P(A,l)}
    \mathcal{D}_{l'}\mathcal{P}^{A,l}(G) \coloneqq \left\{ (x_\omega) \in \mathcal{P}^{A,l}(G) \, | \, x_\omega = 0 \text{ for } |\omega| \le l' \right\}
\end{equation}
  consists of those cubes  in $\mathcal{P}^{A,l}(G)$ vanishing on the $l'$-dimensional corner centered at $(0 \dots 0)$. These points are then uniquely determined  by the values $x_\omega$ for $l'<|\omega| \le l$. 
\end{remark}

\begin{remark} We note that higher degree cubes in abelian groups (and in their duals) were already treated in \cite{Szeg12}, Section 2.7.

These higher degree cubes are, in fact, special instances of Host-Kra groups $HK^d(G,\le l)$, as discussed in \cite{HoKr05} or Section 1.6. of \cite{Tao12}. The definition of boundary maps $\delta_B = \delta_{i_k}^{(k-1)} \circ  \dots  \circ \delta_{i_2}'\circ \delta_{i_1}$  does not depend on the order of taking differences, provided $G$ is a $k$-step nilpotent group with a suitable filtration.

Note also the connection between the role played by $l$-dimensional corners and the unique completion axiom for nilspaces, as discussed in  \cite{HoKr08}, \cite{Szeg12}, \cite{CamSze10}.
\end{remark}

%\section{$2^d$-ary Inner Products and $U^{d,l}$ Norms for Finite Abelian Groups}
\section{\texorpdfstring{$2^d$-ary Inner Products and $U^{d,l}$ Norms for Finite Abelian Groups}{2\^d-ary Inner Products and U\^{d,l} Norms for Finite Abelian Groups}}
\label{sec:norms}

The concept of a \textit{multiplicative discrete derivative} of a function $f \in \mathbb{C}^G$ at a point $x \in G$ in the direction $h \in G$ is well established in the literature, such as in  \cite{Tao12}, Section 1.3. This derivative is defined as:
$$ \Delta_h f(x) = f(x+h)\overline{f(x)}.$$ 
This expression can be interpreted as a sesquilinear functional on pairs of functions from $\mathbb{C}^G$. In this context, we modify this definition slightly to better fit our notation. Specifically, we aim to make the expression complex linear in the second argument and use $A$-label notation. For $i \in A$, we define:  

\begin{equation}
\label{productder}
    \overline{\Delta}^{\{i\}}_{h_i} (f_{0_i},f_{1_i})(x) \coloneqq f_{0_i}(x)\overline{f_{1_i}(x+h_i)}= \prod_{\omega_i \in \{0,1\}^{\{i\}}} \mathcal{C}^{\omega_i}f_{\omega_i}(x + \omega_i h_i),
\end{equation}
where $\mathcal{C}$ denotes the complex conjugation operator.

On the other hand, the group $G$ acts on itself by translations, and each group element $y\in G$ defines a shift operator $T^y:\mathbb{C}^G \to \mathbb{C}^G$, given by 
$$T^y f(x) \coloneqq f(x + y).$$
Thus, we can express the multiplicative discrete derivative using a shift operator as follows:
\begin{equation}
\label{translateder}    
  \overline{\Delta}^{\{i\}}_{h_i} (f_{0_i},f_{1_i})(x) = \left(f_{0_i}\overline{T^{h_i} f_{1_i}}\right)(x). 
\end{equation}
Both expressions \eqref{productder} and \eqref{translateder} for the first-order multiplicative discrete derivative can be generalized, to some extent, by utilizing the filtration of $A$-cubes on $G$  from \eqref{Pfilter}. This generalization enables the definition of multiplicative discrete derivatives of higher orders and degrees.
 
\begin{definition}
For a $d$-dimensional $A$-cube of complex valued functions $F =(f_\omega) : G^{\{0,1\}^A} \to \mathbb{C}$,  define its {\it multiplicative discrete derivative of order $d$ and degree $l$} at a point 
$p =\left( x_\omega \right) \in \mathcal{P}^{A,l}(G)$ as 
\begin{equation}\label{disc_der}
 \overline{\Delta}^A_{p,l} (F)  \coloneqq \prod_{\omega \in \{0,1\}^A} \mathcal{C}^{|\omega|} f_\omega (x_\omega).
\end{equation}

\end{definition}

With this notion in place, we are ready to define the $2^d$-ary inner products of various degrees on $\mathbb{C}^G$ by averaging of multiplicative discrete derivatives over subspaces of higher degree cubes in $G^{\{0,1\} ^A}$. 

\begin{definition}\label{def:in_prod}
On the space of complex-valued functions on $G$, we define the {\it $2^d$-ary inner product of degree $l$ } as follows.  For $F=(f_\omega)$, a $d$-dimensional $A$-cube of  functions, set 
\begin{equation}\label{in_prod_G}
\left\langle F \right\rangle_{A,l} \coloneqq \mathbb{E}_{p \in \mathcal{P}^{A,l}(G)} \overline{\Delta}^A_{p,l} (F).
\end{equation}

Similarly, for $\widehat{F}$, an $A$-cube of complex-valued functions on the dual group $\widehat{G}$, define
\begin{equation}\label{in_prod_hatG}
\left\langle \widehat{F} \right\rangle_{A,l} \coloneqq \sum_{P \in \mathcal{P}^{A,l}(\widehat{G})} \overline{\Delta}^A_{P,l} (\widehat{F}).
\end{equation}
\end{definition}

These inner products are complex $2^{d-1}$-linear in even-weighted arguments and complex $2^{d-1}$-anti-linear in odd-weighted arguments. The $S_A$ part of the symmetry group $S_A \ltimes (\mathbb{Z}/2\mathbb{Z})^A$ of the cube $\{0,1\}^A$ clearly preserves $\left\langle \cdot \right\rangle_{A,l}$, and as we will see in Corollary \ref{cor:Fsplit}, the $(\mathbb{Z}/2\mathbb{Z})^A$ part is responsible for complex conjugate symmetry, similar to the standard Hermitian form on $\mathbb{C}^G$. 
Provided that these inner products are positive-definite, as we will also show in Corollary \ref{cor:Fsplit}, we will refer to them as $2^d$-Hermitian (for lack of a better term).  Tao, in Section 2.2. of \cite{Tao12}, uses the term \textit{quartisesquilinear} for $2^d$-ary inner product in the case $d=2$.

\begin{example}
 For  $(d,l)=(1,0)$, Definition \ref{def:in_prod} represents
the standard sesquilinear inner products on $\mathbb{C}^G$ and $\mathbb{C}^{\widehat{G}}$.

The two degenerate cases $(d,l)=(0,-1)$ and $(d,l)=(0,0)$ are also instructive. Unary inner products  $\left\langle \cdot \right\rangle_{\emptyset,-1}$,   $\left\langle \cdot \right\rangle_{\emptyset,0}$ are linear functionals on $\mathbb{C}^G$ (or $\mathbb{C}^{\widehat{G}}$). Although they are not even semi-definite, they lead to familiar identities:
$$ \left\langle f \right\rangle_{\emptyset,-1} = f(0) = \sum_{\chi \in \widehat{G}} \widehat{f}(\chi) =  \left\langle \widehat{f} \right\rangle_{\emptyset,0} $$
and
$$ \left\langle f \right\rangle_{\emptyset,0} = \mathbb{E}_{x \in G} f(x) = \widehat{f}(1) =  \left\langle \widehat{f} \right\rangle_{\emptyset,-1} .$$

For $(d,l)=(d,1)$ these definitions coincide with the
Gowers inner product $\left\langle \cdot \right\rangle_{U^d(G)}$ of order $d$, see, e.g. \cite{Tao12}, Section 1.3., since
\begin{multline*}
\left\langle (f_\omega)_{\omega \in  \{0,1\}^d} \right\rangle_{U^d(G)} = \mathbb{E}_{x,h_1, \dots , h_d \in G} \prod_{(\omega_1, \dots \omega_d) \in  \{0,1\}^d} \\
\mathcal{C}^{\omega_1 + \dots + \omega_d} f_{(\omega_1, \dots, \omega_d)} (x + \omega_1 h_1 + \dots + \omega_d h_d),\end{multline*} 
becomes, in our notation,
$$ \langle F \rangle_{d,1} = \mathbb{E}_{p \in \mathcal{P}^{\{1, \dots, d\},1}(G)} \overline{\Delta}^{\{1, \dots, d\}}_{p,1} (F).$$ 
Note as well that we can split the multiplicative discrete derivative operator $\overline{\Delta}^{\{1, \dots, d\}}_{p,1}$  at point $p = \left(x_{0\dots0} + \sum_i \omega_i h_i\right)_{\omega \in  \{0,1\}^d} \in \mathcal{P}^{\{1, \dots, d\},1}(G)$ as a $d$-fold composition of linear first order derivative operators:
$$ \overline{\Delta}^{\{1, \dots, d\}}_{p,1} (F) = \overline{\Delta}^{\{1\}}_{h_1} \overline{\Delta}^{\{2\}}_{h_2} \dots \overline{\Delta}^{\{d\}}_{h_d} (F)(x_{0 \dots 0}).$$
\end{example}

For discrete derivatives of higher degrees ($l \ge 2$), the point $p \in \mathcal{P}^{A,l}(G)$ no longer represents a $d$-dimensional parallelepiped, but the operator $\overline{\Delta}^A_{p,l}$  still can be  split as indicated in the next proposition where we list several additional properties of the multiplicative discrete derivative. 

\begin{proposition} 
\label{prop:dersplit}
Let $F=(F_{0_i}, F_{1_i})$ be a $2^d$-tuple of functions $f_{(\omega', \omega_i)}: G \to \mathbb{C}$ where $\omega' \in \{0,1\}^{A \setminus \{i\}}$. Then for  points  $p, q  \in \mathcal{P}^{A,l}(G)$, with $p=(p_{0_i}, p_{1_i}) =  (r_{0_i}+r_{d_i},r_{1_i}+r_{d_i})$ as in \eqref{cornersplit}, the discrete derivative $\overline{\Delta}^A_{p,l}$ satisfies:

\renewcommand{\labelenumi}{\roman{enumi})}
\begin{enumerate}
    
\item $\displaystyle \overline{\Delta}^A_{p,l} (F_{0_i}, F_{1_i}) = \overline{\Delta}^{A\setminus \{i\}}_{p_{0_i},l}\left( F_{0_i} \overline{T^{- \delta_i p}(F_{1_i})}\right)$,

\item $\displaystyle\overline{\Delta}^A_{p+q,l} (F) = \overline{\Delta}^A_{p,l} (T^qF)$,

\item $\displaystyle\overline{\Delta}^A_{(p_{1_i},p_{0_i}),l} (F_{0_i}, F_{1_i}) = \overline{\Delta}^A_{(p_{0_i},p_{1_i}),l} (T^{-\delta_i p}F_{0_i}, T^{\delta_i p}F_{1_i})$,

\item
$\displaystyle\overline{\Delta}^A_{(p_{0_i},p_{1_i}),l} (F_{1_i}, F_{0_i}) = \overline{\overline{\Delta}^A_{(p_{1_i},p_{0_i}),l} (F_{0_i}, F_{1_i}) } ,$

\item $\displaystyle \overline{\Delta}^A_{(p_{0_i},p_{1_i}),l} (F_{0_i}, F_{1_i}) =
\overline{\Delta}^{A\setminus \{i\}}_{r_{0_i},l-1}\left( T^{r_{d_i}}F_{0_i}\right) \overline{
\overline{\Delta}^{A\setminus \{i\}}_{r_{1_i},l-1}\left( T^{r_{d_i}}F_{1_i}\right)}.$
\end{enumerate}
\end{proposition}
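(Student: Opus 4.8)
The plan is to prove (i) and (ii) directly from the definition \eqref{disc_der}, and then to deduce (iii)--(v) from them together with the group and symmetry structure of $\mathcal{P}^{A,l}(G)$ recorded above. The one recurring device throughout is to split the product $\prod_{\omega\in\{0,1\}^A}$ as $\prod_{\omega'\in\{0,1\}^{A\setminus\{i\}}}\prod_{\omega_i\in\{0,1\}}$, keeping track of the parities $|(\omega',0_i)|=|\omega'|$ and $|(\omega',1_i)|=|\omega'|+1$, so that the $\omega_i=1$ factor carries exactly one extra conjugation $\mathcal{C}$ relative to its $\omega_i=0$ partner.

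For (i), I would expand the right-hand side. By Proposition \ref{prop:dBshift}(i) the cube $p_{0_i}=p\circ s_i^0$ lies in $\mathcal{P}^{A\setminus\{i\},l}(G)$, so the expression is meaningful, and evaluating the $(A\setminus\{i\})$-cube of functions $F_{0_i}\,\overline{T^{-\delta_i p}(F_{1_i})}$ at $p_{0_i}$, the $\omega'$-factor equals $f_{(\omega',0_i)}(x_{(\omega',0_i)})\,\overline{f_{(\omega',1_i)}(x_{(\omega',0_i)}-\delta_i p(\omega'))}$; since $\delta_i p(\omega')=x_{(\omega',0_i)}-x_{(\omega',1_i)}$, the inner argument is $x_{(\omega',1_i)}$, and attaching the factor $\mathcal{C}^{|\omega'|}$ and reassembling $\prod_{\omega'}\prod_{\omega_i}$ into $\prod_\omega$ recovers \eqref{disc_der}. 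Part (ii) is immediate from the definitions: $p+q\in\mathcal{P}^{A,l}(G)$ because $\mathcal{P}^{A,l}(G)$ is an intersection of kernels of homomorphisms, hence a subgroup, and both sides equal $\prod_\omega\mathcal{C}^{|\omega|}f_\omega(p(\omega)+q(\omega))$ by the definition of the shift $T^q$.

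Parts (iii)--(v) are then purely formal. For (iii): $(p_{1_i},p_{0_i})$ lies in $\mathcal{P}^{A,l}(G)$ by its $(\mathbb{Z}/2\mathbb{Z})^A$-symmetry, hence so does $q:=(p_{1_i},p_{0_i})-p=(-\delta_i p,\delta_i p)$, and applying (ii) with this $q$, together with $T^qF=(T^{-\delta_i p}F_{0_i},T^{\delta_i p}F_{1_i})$, yields the claim. For (iv): the coordinate reflection $\sigma_i\colon\{0,1\}^A\to\{0,1\}^A$ satisfies $|\sigma_i(\omega)|\equiv|\omega|+1\pmod 2$, so reindexing the product \eqref{disc_der} by $\sigma_i$ replaces every $\mathcal{C}^{|\omega|}$ by $\mathcal{C}^{|\omega|+1}$ and thereby conjugates the whole product; since swapping the two halves of both the point and the cube of functions realizes $\sigma_i$, this is exactly (iv). For (v): by \eqref{cornersplit} write $p=(r_{0_i},r_{1_i})+(r_{d_i},r_{d_i})$, where the first summand lies in $\mathcal{P}^{A,l}(G)$ via \eqref{Pdecomp} (both $r_{0_i}$ and $r_{1_i}$ have degree $l-1$) and the second is the pullback $\bar\pi_i^{*}(r_{d_i})$ of $r_{d_i}\in\mathcal{D}_{l-1}\mathcal{P}^{A\setminus\{i\},l}(G)$; apply (ii) to absorb the translation by $(r_{d_i},r_{d_i})$ into $F$, and then split $\prod_\omega$ into its $\omega_i=0$ and $\omega_i=1$ halves. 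The $\omega_i=0$ half is $\overline{\Delta}^{A\setminus\{i\}}_{r_{0_i},l-1}(T^{r_{d_i}}F_{0_i})$, while the $\omega_i=1$ half, carrying the extra conjugation coming from $|(\omega',1_i)|=|\omega'|+1$, is the complex conjugate of $\overline{\Delta}^{A\setminus\{i\}}_{r_{1_i},l-1}(T^{r_{d_i}}F_{1_i})$.

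I do not expect any conceptual obstacle: every step is an unwinding of \eqref{disc_der} combined with facts already proved about $\mathcal{P}^{A,l}(G)$. The most delicate point is (v), where one must simultaneously verify that both corner-split pieces $(r_{0_i},r_{1_i})$ and $(r_{d_i},r_{d_i})$ genuinely sit in $\mathcal{P}^{A,l}(G)$, so that (ii) is applicable, and keep the single conjugation bar attached to the correct ($\omega_i=1$) factor. The other easy-to-slip point, already present in (i) and (iii), is matching the sign in $T^{-\delta_i p}$ against the conventions $T^yf(x)=f(x+y)$ and $\delta_i p=p\circ s_i^0-p\circ s_i^1$.
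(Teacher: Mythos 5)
Your proposal is correct and follows essentially the same route as the paper's proof: (i) and (ii) by direct expansion of the definition with the product split over $\omega_i$, (iii) as a consequence of (ii) with $q=(-\delta_i p,\delta_i p)$, (iv) via the parity-flipping reindexing by the $i$-th coordinate reflection, and (v) via the corner decomposition \eqref{cornersplit}. The only cosmetic differences are that you expand the right-hand side in (i) rather than the left, and that you route (v) explicitly through (ii), which the paper carries out by the same direct computation.
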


\begin{proof}
For the first identity, recall that the point  $p=(p_{0_i},p_{1_i})$ in  $\mathcal{P}^{A,l}(G)$ can be identified with a pair 
$[\delta_i p , p \circ s_i^1 ] = [p_{0_i}-p_{1_i}, p_{1_i}]\in \mathcal{P}^{A \setminus \{i\},l-1}(G) \times   \mathcal{P}^{A \setminus \{i\},l}(G)$, where $p_{1_i} = p \circ s_i^1 = p \circ s_i^0 - \delta_i p = p_{0_i} - (p_{0_i} - p_{1_i})$ as in \eqref{Pdecomp}.

Then we have $$\overline{\Delta}^A_{p,l} (F) = \prod_{\omega \in \{0,1\}^A} \mathcal{C}^{|\omega|} f_\omega (x_\omega) = $$
$$ = \prod_{\omega' \in \{0,1\}^{A \setminus \{i\}}} \prod_{\omega_i \in \{0,1\}^{\{i\}}} \mathcal{C}^{|\omega'|+|\omega_i|} f_{(\omega', \omega_i)} (x_{(\omega', \omega_i )}) = $$
$$ = 
 \prod_{\omega' \in \{0,1\}^{A \setminus \{i\}}}  \mathcal{C}^{|\omega'|} \left[ f_{(\omega', 0_i)} (x_{(\omega', 0_i )}) \overline{f_{(\omega', 1_i)} (x_{(\omega', 0_i )} - {\delta_i p}_{\omega'} )} \right]= $$
$$ = 
 \prod_{\omega' \in \{0,1\}^{A \setminus \{i\}}}  \mathcal{C}^{|\omega'|} \left[ f_{(\omega', 0_i)}  \overline{T^{{-\delta_i p}_{\omega'}}f_{(\omega', 1_i)}} \right](x_{(\omega', 0_i )})= $$
 $$ = 
 \prod_{\omega' \in \{0,1\}^{A \setminus \{i\}}}  \mathcal{C}^{|\omega'|} \left[ \overline{\Delta}^{\{i\}}_{{-\delta_ip}_{\omega'}} \left(f_{(\omega', 0_i)} , f_{(\omega', 1_i)}\right) \right](x_{(\omega', 0_i )})= $$
 $$ = \overline{\Delta}^{A\setminus \{i\}}_{p_{0_i},l}\left( \overline{\Delta}^{\{i\}}_{{-\delta_ip}_{\omega'}} (f_{(\omega', 0_i)}, f_{(\omega', 1_i)})\right) = \overline{\Delta}^{A\setminus \{i\}}_{p_{0_i},l}\left( F_{0_i}\overline{T^{- \delta_i p}F_{1_i}}\right).$$
For the second identity, let $p=(x_\omega)$ and $q=(y_\omega)$ be points in $\mathcal{P}^{A,l}(G)$. Then  
$$\overline{\Delta}^A_{p+q,l} (F) = \prod_{\omega \in \{0,1\}^A} \mathcal{C}^{|\omega|} f_\omega (x_\omega + y_\omega) = 
\prod_{\omega \in \{0,1\}^A} \mathcal{C}^{|\omega|} T^{y_\omega}f_\omega (x_\omega) = 
\overline{\Delta}^A_{p,l} (T^qF).$$ 
For the third identity, note that $(p_{1_i},p_{0_i}) = (p_{0_i},p_{1_i}) + (-\delta_i p, \delta_i p)$. By applying the second identity ii), the claim follows.  

For the fourth identity, switching the order in the $i$-direction in the function-cube $ (F_{1_i}, F_{0_i})$ gives:

  $$\overline{\Delta}^A_{(p_{0_i},p_{1_i}),l} (F_{1_i}, F_{0_i}) =  \prod_{\omega' \in \{0,1\}^{A \setminus \{i\}}}  \mathcal{C}^{|\omega'|} \left[ f_{(\omega', 1_i)} (x_{(\omega', 0_i )}) \overline{f_{(\omega', 0_i)} (x_{(\omega', 1_i )} )} \right]= $$
 $$ 
  =  \prod_{\omega' \in \{0,1\}^{A \setminus \{i\}}}  \mathcal{C}^{|\omega'|+1} \left[ f_{(\omega', 0_i)} (x_{(\omega', 1_i )}) \overline{f_{(\omega', 1_i)} (x_{(\omega', 0_i )})} \right]= \overline{\overline{\Delta}^A_{(p_{1_i},p_{0_i}),l} (F_{0_i}, F_{1_i})}.
 $$
 
 For the fifth identity, recall the decomposition $p=(p_{0_i}, p_{1_i}) = (x_\omega)= (r_{0_i}+r_{d_i},r_{1_i}+r_{d_i})$, where $[r_{0_i}, r_{1_i},r_{d_i}] \in \mathcal{P}^{A\setminus \{i\},l-1}(G) \times \mathcal{P}^{A\setminus \{i\},l-1}(G) \times \mathcal{D}_{l-1}\mathcal{P}^{A \setminus \{i\},l}(G)$, as in \eqref{Pdecomp2} and \eqref{cornersplit}.
 
 Then, 
 $$\overline{\Delta}^A_{(p_{0_i},p_{1_i}),l} (F_{0_i}, F_{1_i})= \prod_{\omega \in \{0,1\}^A} \mathcal{C}^{|\omega|} f_\omega (x_\omega) = $$
 $$ = \prod_{\omega' \in \{0,1\}^{A \setminus \{i\}}} \prod_{\omega_i \in \{0,1\}^{\{i\}}} \mathcal{C}^{|\omega'|+|\omega_i|} f_{(\omega', \omega_i)} (r_{\omega_i}(\omega') + r_{d_i}(\omega')) = $$
  $$ = \prod_{\omega_i \in \{0,1\}^{\{i\}}}  \mathcal{C}^{|\omega_i|} \prod_{\omega' \in \{0,1\}^{A \setminus \{i\}}}  \mathcal{C}^{|\omega'|} T^{r_{d_i}(\omega')}f_{(\omega', \omega_i)} (r_{\omega_i}(\omega') ) = $$
$$=  \prod_{\omega_i \in \{0,1\}^{\{i\}}}  \mathcal{C}^{|\omega_i|} 
\overline{\Delta}^{A \setminus \{i\}}_{r_{\omega_i},l-1} T^{r_{d_i}}F_{\omega_i} 
 = \overline{\Delta}^{A\setminus \{i\}}_{r_{0_i},l-1}\left( T^{r_{d_i}}F_{0_i}\right)  \overline{
\overline{\Delta}^{A\setminus \{i\}}_{r_{1_i},l-1}\left( T^{r_{d_i}}F_{1_i}\right)}.
 $$ 
\end{proof}

\begin{corollary} 
\label{cor:Fsplit}
Let $(F_{0_i}, F_{1_i})$ be a $2^d$-tuple of functions $f_{(\omega', \omega_i)}: G \to \mathbb{C}$ where $\omega' \in \{0,1\}^{A \setminus \{i\}}$.
Then the inner product $\left\langle (F_{0_i}, F_{1_i}) \right\rangle_{A,l}$ satisfies:

\renewcommand{\labelenumi}{\roman{enumi})}
\begin{enumerate}
\item
$\displaystyle \left\langle \left(F_{0_i}, F_{1_i}\right) \right\rangle_{A,l} =  \mathbb{E}_{p' \in \mathcal{P}^{A \setminus \{i\},l-1}(G)} \left\langle   F_{0_i} \overline{T^{p'}F_{1_i}}  \right\rangle_{A\setminus\{i\},l}, $
\item
$ \displaystyle \left\langle \left(F_{1_i}, F_{0_i}\right) \right\rangle_{A,l} =  \overline{\left\langle \left(F_{0_i}, F_{1_i}\right) \right\rangle_{A,l}},$
\item $\displaystyle \left\langle \left(F_{0_i}, F_{1_i}\right) \right\rangle_{A,l} =  \mathbb{E}_{r \in \mathcal{D}_{l-1}\mathcal{P}^{A \setminus \{i\},l}(G)} \left[ \left\langle   T^r F_{0_i} \right\rangle_{A\setminus\{i\},l-1}  \overline{ \left\langle   T^r F_{1_i}  \right\rangle_{A\setminus\{i\},l-1}}\right],$

\item if $F_{0_i}= F_{1_i}$, the inner product  $\left\langle \left(F_{0_i}, F_{0_i}\right) \right\rangle_{A,l}$ is real and non-negative,

\item for $F = \left( f \right)_{\omega \in \{0,1\} ^A}$ and $0 \le l \le d-1$ the inner product $\left\langle F \right\rangle_{A,l}$ is positive definite.
\end{enumerate}
\end{corollary}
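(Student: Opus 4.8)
The plan is to prove the five items of Corollary \ref{cor:Fsplit} in order, using the splitting identities from Proposition \ref{prop:dersplit} together with the filtration and decomposition results from Section \ref{sec:HDC}. For item i), I would start from the definition \eqref{in_prod_G}, $\left\langle (F_{0_i},F_{1_i})\right\rangle_{A,l} = \mathbb{E}_{p\in\mathcal{P}^{A,l}(G)}\overline{\Delta}^A_{p,l}(F_{0_i},F_{1_i})$, and apply Proposition \ref{prop:dersplit} i) to rewrite the integrand as $\overline{\Delta}^{A\setminus\{i\}}_{p_{0_i},l}\bigl(F_{0_i}\,\overline{T^{-\delta_i p}F_{1_i}}\bigr)$. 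The key is then to reparametrize the averaging over $\mathcal{P}^{A,l}(G)$: by the split short exact sequence \eqref{Pdecomp}, the map $p\mapsto (p_{0_i},\delta_i p)\in \mathcal{P}^{A\setminus\{i\},l}(G)\times\mathcal{P}^{A\setminus\{i\},l-1}(G)$ is a group isomorphism (combining Proposition \ref{prop:dBshift} ii) with $B=\{i\}$ and the fact that $C_{s_i^0}$ lands in $\mathcal{P}^{A\setminus\{i\},l}$), so the Haar/uniform measure factors as a product measure. Hence the average over $p$ becomes $\mathbb{E}_{p_{0_i}}\mathbb{E}_{p'}$ where $p'=\delta_i p$ ranges over $\mathcal{P}^{A\setminus\{i\},l-1}(G)$; pulling the sign into $p'$ (replacing $-\delta_i p$ by $p'$, a harmless reindexing of the group) and recognizing the inner $\mathbb{E}_{p_{0_i}}$ as $\left\langle F_{0_i}\overline{T^{p'}F_{1_i}}\right\rangle_{A\setminus\{i\},l}$ gives i).

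For item ii), I would combine Proposition \ref{prop:dersplit} iv) with a change of variables. Starting from $\left\langle(F_{1_i},F_{0_i})\right\rangle_{A,l}=\mathbb{E}_{p}\overline{\Delta}^A_{(p_{0_i},p_{1_i}),l}(F_{1_i},F_{0_i})$, apply iv) to get $\mathbb{E}_p\overline{\overline{\Delta}^A_{(p_{1_i},p_{0_i}),l}(F_{0_i},F_{1_i})}$; then substitute $p\mapsto \tilde p$ with $\tilde p_{0_i}=p_{1_i}$, $\tilde p_{1_i}=p_{0_i}$ — this is the coordinate swap in the $i$-direction, which by the $(\mathbb{Z}/2\mathbb{Z})^A$-symmetry of $\mathcal{P}^{A,l}(G)$ is a measure-preserving automorphism — so the average becomes $\overline{\mathbb{E}_{\tilde p}\overline{\Delta}^A_{(\tilde p_{0_i},\tilde p_{1_i}),l}(F_{0_i},F_{1_i})}=\overline{\left\langle(F_{0_i},F_{1_i})\right\rangle_{A,l}}$. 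For item iii), I would do the same kind of reparametrization as in i) but using the finer decomposition \eqref{Pdecomp2}: the map $p\mapsto[r_{0_i},r_{1_i},r_{d_i}]$ is a group isomorphism onto $\mathcal{P}^{A\setminus\{i\},l-1}(G)^2\times\mathcal{D}_{l-1}\mathcal{P}^{A\setminus\{i\},l}(G)$, so the uniform measure is the product measure; applying Proposition \ref{prop:dersplit} v) to the integrand and then carrying out the $\mathbb{E}_{r_{0_i}}$ and $\mathbb{E}_{r_{1_i}}$ averages independently (they factor because the integrand splits as a product of a term in $r_{0_i}$ and the conjugate of a term in $r_{1_i}$, both sharing only the $r_{d_i}$-translation) identifies them with $\left\langle T^{r_{d_i}}F_{0_i}\right\rangle_{A\setminus\{i\},l-1}$ and $\overline{\left\langle T^{r_{d_i}}F_{1_i}\right\rangle_{A\setminus\{i\},l-1}}$, leaving the outer average over $r_{d_i}\in\mathcal{D}_{l-1}\mathcal{P}^{A\setminus\{i\},l}(G)$.

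Items iv) and v) then follow by induction on $d$ (equivalently on $l$, since $0\le l\le d-1$ forces $d\ge 1$). Item iv): if $F_{0_i}=F_{1_i}=F_0$, then by iii) the inner product equals $\mathbb{E}_{r}\bigl|\left\langle T^rF_0\right\rangle_{A\setminus\{i\},l-1}\bigr|^2\ge 0$, which is manifestly real and non-negative — the base case being the degenerate unary/binary cases already computed in the preceding example (or $(d,l)=(1,0)$, the ordinary Hermitian square), and the inductive step being exactly this formula with the inner quantity a genuine complex number. Item v): taking all $f_\omega=f$, item iv) already gives $\left\langle F\right\rangle_{A,l}\ge 0$; for strict positivity when $f\not\equiv 0$, I would again induct via iii): $\left\langle F\right\rangle_{A,l}=\mathbb{E}_r\bigl|\left\langle T^r F'\right\rangle_{A\setminus\{i\},l-1}\bigr|^2$ where $F'=(f)_{\omega\in\{0,1\}^{A\setminus\{i\}}}$; since $T^r$ is a unitary and $\left\langle\cdot\right\rangle_{A\setminus\{i\},l-1}$ is positive definite by the inductive hypothesis (for $l-1\ge 0$, i.e. $l\ge 1$), each summand is $>0$, so the average is $>0$; the base case $l=0$ is $\left\langle F\right\rangle_{A,0}=\mathbb{E}_{x\in G}|f(x)|^{2^d}$ for the diagonal $\mathcal{P}^{A,0}(G)$, which is $>0$ iff $f\not\equiv 0$. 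The main obstacle I anticipate is purely bookkeeping: in each of i)–iii) one must verify carefully that the relevant coordinate map on $\mathcal{P}^{A,l}(G)$ is a \emph{group} isomorphism so that normalized counting measure (or Haar measure, in the compact case) pushes forward to the product measure, and one must keep the weights $|\omega|$, the conjugations $\mathcal{C}^{|\omega|}$, and the translation directions $\pm\delta_i p$, $r_{d_i}$ consistent — the statements of Proposition \ref{prop:dersplit} are set up precisely to make these reindexings legal, so the argument is routine once that is recognized, but it requires care with signs.
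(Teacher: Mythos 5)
Your arguments for parts i)--iv) are correct and follow essentially the same route as the paper: Proposition \ref{prop:dersplit} i), iv), v) applied to the integrand, combined with the factorizations \eqref{Pdecomp} and \eqref{Pdecomp2} of $\mathcal{P}^{A,l}(G)$ as product groups so that the uniform average factors, plus the harmless sign/swap reindexings. (Using $(p_{0_i},\delta_i p)$ rather than the paper's $(p\circ s_i^1,\delta_i p)$ as coordinates is immaterial, and deriving iv) from iii) alone is fine.)

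There is, however, a genuine flaw in your argument for part v). You write that by the inductive hypothesis each summand $\bigl|\left\langle T^{r}F_{0_i}\right\rangle_{A\setminus\{i\},l-1}\bigr|^2$ is strictly positive ``since $T^r$ is a unitary and $\left\langle\cdot\right\rangle_{A\setminus\{i\},l-1}$ is positive definite.'' This does not follow: for $r\in\mathcal{D}_{l-1}\mathcal{P}^{A\setminus\{i\},l}(G)$ the cube $T^{r}F_{0_i}$ has components $T^{r(\omega')}f$ with \emph{different} shifts in different coordinates, so it is not a constant cube $(g)_{\omega'}$, and positive definiteness (which the induction only establishes for constant cubes) does not apply to it. Nor is $\left\langle\cdot\right\rangle_{A\setminus\{i\},l-1}$ invariant under such componentwise-varying translations. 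Indeed individual summands can vanish: for $d=2$, $l=1$, $f=1_{\{0\}}$ on $\mathbb{Z}/N\mathbb{Z}$, the summand at $r=(0,h)$ is $\bigl|\mathbb{E}_x f(x)\overline{f(x+h)}\bigr|^2=0$ for every $h\neq 0$. The conclusion is still true, but the correct argument (the one the paper uses) is weaker and suffices: all summands are non-negative by iv), the averaging set $\mathcal{D}_{l-1}\mathcal{P}^{A\setminus\{i\},l}(G)$ contains $r=0$, and at $r=0$ the summand is $\bigl|\left\langle (f)_{\omega'}\right\rangle_{A\setminus\{i\},l-1}\bigr|^2>0$ by the inductive hypothesis applied to the constant cube; a finite average of non-negative reals with one positive term is positive. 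You should replace the ``each summand is $>0$'' claim with this single-term argument.
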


\begin{proof} For the first identity, we the decomposition of point-cubes $p \in \mathcal{P}^{A,l}(G)$ into pairs
$(p \circ s_i^1, \delta_i p) \in \mathcal{P}^{A \setminus \{i\},l}(G) \times \mathcal{P}^{A \setminus \{i\},l-1}(G)$ as in Proposition~\ref{prop:dersplit}(i). Since $\delta_i p \in \mathcal{P}^{A\setminus\{i\},l-1}(G)$, we can average over this subspace and perform a change of variables $p' = -\delta_i p$ to obtain:
$$ \left\langle \left(F_{0_i}, F_{1_i}\right) \right\rangle_{A,l} =  \mathbb{E}_{p \in \mathcal{P}^{A,l}(G)} \overline{\Delta}^A_{p,l} \left(F_{0_i}, F_{1_i}\right)= $$ 
$$= \mathbb{E}_{[p \circ s_i^1   ,\delta_i p] \in \mathcal{P}^{A\setminus\{i\},l}(G) \times \mathcal{P}^{A\setminus\{i\},l-1}(G)}   \left[ \overline{\Delta}^{A \setminus \{i\}}_{p \circ s_i^1  ,l} \left( F_{0_i}\overline{T^{-\delta_i p}F_{1_i}}\right) \right] = 
$$
$$= \mathbb{E}_{p'  \in \mathcal{P}^{A\setminus\{i\},l-1}(G)} \left[ \mathbb{E}_{p \circ s_i^1   \in \mathcal{P}^{A\setminus\{i\},l}(G)}     \overline{\Delta}^{A \setminus \{i\}}_{p \circ s_i^1  ,l}  \left( F_{0_i}\overline{T^{p'}F_{1_i}}\right) \right] = $$
$$ = \mathbb{E}_{p'  \in \mathcal{P}^{A\setminus\{i\},l-1}(G)}  \left\langle F_{0_i}\overline{T^{p'}F_{1_i}} \right\rangle_{A\setminus \{i\},l} .$$

For the second identity, we apply Proposition \ref{prop:dersplit} iv) and the fact, that the space $\mathcal{P}^{A,l}(G)$ is invariant under the involution switching the $i$-th coordinate in a point-cube
$(p_{0_i},p_{1_i}) \mapsto (p_{1_i},p_{0_i})$. We conclude:

$$\left\langle  \left(F_{1_i}, F_{0_i}\right)  \right\rangle_{A,l} = \mathbb{E}_{(p_{0_i},p_{1_i}) \in \mathcal{P}^{A,l}(G)} \overline{\Delta}^A_{(p_{0_i},p_{1_i}),l}  \left(F_{1_i}, F_{0_i}\right)  = $$
$$ = \mathbb{E}_{(p_{0_i},p_{1_i}) \in \mathcal{P}^{A,l}(G)} \overline{\overline{\Delta}^A_{(p_{1_i},p_{0_i}),l}  \left(F_{0_i}, F_{1_i}\right)} = \overline{\left\langle \left(F_{0_i}, F_{1_i}\right) \right\rangle_{A,l}}.$$

Claim  iii) follows from Proposition \ref{prop:dersplit} v) and the decomposition of the space of  degree-$l$ cubes $\mathcal{P}^{A,l}(G) \cong  \mathcal{P}^{A\setminus \{i\},l-1}(G) \times \mathcal{P}^{A\setminus \{i\},l-1}(G) \times \mathcal{D}_{l-1}\mathcal{P}^{A \setminus \{i\},l}(G)$, as given by \eqref{Pdecomp2}.

$$ \left\langle \left(F_{0_i}, F_{1_i}\right) \right\rangle_{A,l} =  \mathbb{E}_{(p_{0_i}, p_{1_i}) \in \mathcal{P}^{A,l}(G)} \overline{\Delta}^A_{(p_{0_i}, p_{1_i}),l} \left(F_{0_i}, F_{1_i}\right)= $$ 
\begin{multline*}
= \mathbb{E}_{[r_{0_i},r_{1_i},r_{d_i}] \in  \mathcal{P}^{A\setminus \{i\},l-1}(G) \times \mathcal{P}^{A\setminus \{i\},l-1}(G) \times \mathcal{D}_{l-1}\mathcal{P}^{A \setminus \{i\},l}(G) } \\
\left[\overline{\Delta}^{A\setminus \{i\}}_{r_{0_i},l-1}\left( T^{r_{d_i}}F_{0_i}\right)  \overline{
\overline{\Delta}^{A\setminus \{i\}}_{r_{1_i},l-1}\left( T^{r_{d_i}}F_{1_i}\right)}\right] = \end{multline*}

\begin{multline*}
= \mathbb{E}_{r_{d_i} \in  \mathcal{D}_{l-1}\mathcal{P}^{A \setminus \{i\},l}(G) } \bigg[ \left(\mathbb{E}_{r_{0_i} \in  \mathcal{P}^{A \setminus \{i\},l-1}(G)} \overline{\Delta}^{A\setminus \{i\}}_{r_{0_i},l-1}\left( T^{r_{d_i}}F_{0_i}\right)\right) \cdot \\ \cdot
\overline{ \left(\mathbb{E}_{r_{1_i} \in  \mathcal{P}^{A \setminus \{i\},l-1}(G)}  \overline{\Delta}^{A\setminus \{i\}}_{r_{1_i},l-1}\left( T^{r_{d_i}}F_{1_i}\right)\right)} \bigg]=
\end{multline*}

$$=\mathbb{E}_{r \in \mathcal{D}_{l-1}\mathcal{P}^{A \setminus \{i\},l}(G)} \left[ \left\langle   T^{r}F_{0_i} \right\rangle_{A\setminus\{i\},l-1}  \overline{ \left\langle   T^{r}F_{1_i}  \right\rangle_{A\setminus\{i\},l-1}}\right],$$
where we relabeled variable $r_{d_i}$ as $r$ in the final step.

The fact that  $\left\langle \left(F_{0_i}, F_{0_i}\right) \right\rangle_{A,l}$ is real and non-negative follows directly from parts ii) and iii). 

For $F = \left( f \right)_{\omega'' \in \{0,1\} ^{A'}}$ and any non-empty $A'\subseteq A$, with $|A'|=k$, we observe that
$$\left\langle F \right\rangle_{A',0} = \sum_{x \in G} \left|f(x)\right|^{2^k}  =  \| f\|^{2^k}_{L^{2^k}(G)},$$ 
since the space $\mathcal{P}^{A',0}$ of degree-$0$ $A'$-cubes consists  only of diagonal elements in $G^{2^k}$. Hence, the $2^k$-ary inner product $\left\langle \cdot \right\rangle_{A',0}$ is positive definite.

Moreover, the space $\mathcal{D}_{l-1}\mathcal{P}^{A \setminus \{i\},l}(G)$ contains the zero element $r_{d_i}=0$. In this case,  $T^{r_{d_i}}F_{0_i} = F_{0_i} = \left( f\right)_{\omega' \in \{0,1\} ^{A\setminus\{i\}}}$. By induction on $|A|$, we may assume that the inner product 
$\langle \left( f\right)_{\omega' \in \{0,1\} ^{A\setminus\{i\}}} \rangle_{A\setminus\{i\},l-1}$ is positive whenever $f \neq 0 \in \mathbb{C}^G$. Then, applying part  iii), we deduce that  $\langle \cdot \rangle_{A,l}$ is also positive definite.

We note that this inductive argument relies on the averaging over a finite group $G$, and therefore applies only when $G$ is finite.  
\end{proof}

\begin{remark} As a special case, for $\left( f \right)_{\omega \in \{0,1\} ^A}$ and $l=1$, part i) recovers the recursive formula for Gowers uniformity norms $U^A$:
$$ \|f\|_{U^A(G)}^{2^d} = 
\left\langle (f)_{\omega \in  \{0,1\}^A} \right\rangle_{A,1}  
=$$
$$ = \mathbb{E}_{p'  \in \mathcal{P}^{A\setminus\{i\},0}(G)}  \left\langle  \left(f\right)_{\omega' \in  \{0,1\}^{A\setminus\{i\} }}  \overline{T^{p'}\left(f\right)_{\omega' \in  \{0,1\}^{A\setminus\{i\} }}} \right\rangle_{A\setminus \{i\},1}= $$
$$ = \mathbb{E}_{p'  \in \mathcal{P}^{A\setminus\{i\},0}(G)} \left\langle \left(\overline{\Delta}^{\{i\}}_{p'_{\omega'}} (f,f) \right)_{\omega' \in  \{0,1\}^{A\setminus\{i\} }}
\right\rangle_{A\setminus \{i\},1} = 
\mathbb{E}_{h \in G} \left\|\Delta_{h} f \right\|^{2^{d-1}}_{U^{A\setminus\{i\}}(G)},$$
since each $p'  \in \mathcal{P}^{A\setminus\{i\},0}(G)$ is a constant $2^{(d-1)}$-tuple $(h)_{\omega' \in  \{0,1\}^{A\setminus\{i\} }}$ for some $h \in G$.

Part iii) is analogous to the \emph{splitting axiom} used in the definition of a higher-order inner product space, see 
\cite[Definition~2.2.1]{Tao12}. 

% we would get the same from iii)  ??? 
%
% No, $\mathcal{D}_0}\mathcal{P}^{A \setminus \{i\},1} (G)$ corresponds to space of parallelepipeds with x=0
% here, we obtain norm as average of auto-correlations 
\end{remark}

Parts iv) and v) of the previous corollary motivate the following definition. 

\begin{definition}
Let $f: G \to \mathbb{C}$ be a function, and consider the $A$-cube $(f)_{\omega \in \{0,1\}^A}$. For any $l \ge 0$, we define the \textit{uniformity norm of order $d $ and degree $l$} by
$$
\|f\|_{U^{A,l}} \coloneqq \left\langle \left(f\right)_{\omega \in \{0,1\}^A} \right\rangle_{A,l}^{1/{2^d}}.
$$
As we will see in the next two propositions, this defines a (semi-)norm on the space $\mathbb{C}^G$ of complex-valued functions on $G$.
\end{definition}

\begin{proposition}[Cauchy-Schwarz-Gowers inequality]
\label{CSG}
Let $F = (f_\omega)_{\omega \in \{0,1\}^A}$ be a $2^d$-tuple of functions $f_\omega: G \to \mathbb{C}$.
Then the inner product $\left\langle F \right\rangle_{A,l}$ satisfies the Cauchy-Schwarz-Gowers inequality:
$$ \left| \left\langle F  \right\rangle_{A,l} \right| \le \prod_{\omega \in \{0,1\}^{A}}  \left\| f_\omega  \right\|_{U^{A,l}}. $$
\end{proposition}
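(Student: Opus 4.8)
The plan is to adapt the classical Gowers–Cauchy–Schwarz argument to the degree-$l$ setting, using Corollary~\ref{cor:Fsplit} as the main tool for inducting on the dimension $d = |A|$. The base case is $A = \emptyset$ (or $|A'| = 0$), where $\langle F \rangle_{A',0} = \|f\|_{L^{2^k}}^{2^k}$ splits as a product and the inequality is an equality; more precisely, for a single function the diagonal structure makes everything trivial. For the inductive step, fix $i \in A$ and write $F = (F_{0_i}, F_{1_i})$. By Corollary~\ref{cor:Fsplit}(iii), $\langle F \rangle_{A,l}$ equals an average over $r \in \mathcal{D}_{l-1}\mathcal{P}^{A\setminus\{i\},l}(G)$ of the product $\langle T^r F_{0_i}\rangle_{A\setminus\{i\},l-1} \, \overline{\langle T^r F_{1_i}\rangle_{A\setminus\{i\},l-1}}$.

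The key step is to apply the Cauchy–Schwarz inequality in the averaging variable $r$: the modulus of this average is at most the geometric mean $\left(\mathbb{E}_r |\langle T^r F_{0_i}\rangle_{A\setminus\{i\},l-1}|^2\right)^{1/2} \left(\mathbb{E}_r |\langle T^r F_{1_i}\rangle_{A\setminus\{i\},l-1}|^2\right)^{1/2}$. Now each factor, say $\mathbb{E}_r |\langle T^r F_{0_i}\rangle_{A\setminus\{i\},l-1}|^2$, must be recognized as an inner product $\langle \cdot \rangle_{A,l}$ of a ``doubled'' $2^d$-tuple — namely the one obtained from $F$ by replacing the $\omega_i = 1$ half $F_{1_i}$ with another copy of $F_{0_i}$ (suitably conjugated to match the weight parity). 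This is exactly the reverse direction of Corollary~\ref{cor:Fsplit}(iii): writing $\langle (F_{0_i}, F_{0_i}) \rangle_{A,l} = \mathbb{E}_r \langle T^r F_{0_i}\rangle_{l-1}\overline{\langle T^r F_{0_i}\rangle_{l-1}} = \mathbb{E}_r |\langle T^r F_{0_i}\rangle_{l-1}|^2$. Hence
\[
\left|\langle F \rangle_{A,l}\right| \le \langle (F_{0_i}, F_{0_i})\rangle_{A,l}^{1/2} \, \langle (F_{1_i}, F_{1_i})\rangle_{A,l}^{1/2}.
\]
Iterating this over all $2^{d-1}$ coordinate directions of $A$ in turn — each time replacing, in one more direction, both halves by a single one — collapses each $f_\omega$ into its own pure power and produces $\prod_\omega \langle (f_\omega)_{\omega' \in \{0,1\}^A}\rangle_{A,l}^{1/2^d} = \prod_\omega \|f_\omega\|_{U^{A,l}}$. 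One should be slightly careful that after substituting $F_{1_i} \mapsto F_{0_i}$ the resulting tuple still has the property needed to split in the next direction $j \ne i$; but since Corollary~\ref{cor:Fsplit} holds for arbitrary $2^d$-tuples, this is automatic.

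The main obstacle — really the only subtle point — is bookkeeping: making sure that after $k$ applications of the doubling step, the inner product appearing is genuinely $\langle (g_1, g_1, \ldots, g_{2^{d-k}})\rangle_{A,l}$ for the appropriate functions $g_j$ running over the remaining $2^{d-k}$ sub-faces, and that the exponents multiply correctly to give $1/2^d$ in the end. An alternative, cleaner packaging is to prove by downward induction on the number of ``free'' directions the intermediate inequality $|\langle F\rangle_{A,l}| \le \prod_{\varepsilon \in \{0,1\}^S} \langle F^{(\varepsilon)}\rangle_{A,l}^{1/2^{|S|}}$, where $S$ is the set of directions already processed and $F^{(\varepsilon)}$ is $F$ with its $S$-coordinates forced to the pattern $\varepsilon$ (so $F^{(\varepsilon)}$ is constant in the $S$-directions, equal to the $2^{d-|S|}$-tuple $(f_{(\omega_{|A\setminus S},\,\varepsilon)})$). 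The step from $S$ to $S \cup \{i\}$ is precisely the Cauchy–Schwarz-in-$r$ computation above applied inside $\langle F^{(\varepsilon)}\rangle_{A,l}$, and when $S = A$ every $F^{(\varepsilon)}$ is the constant tuple $(f_\varepsilon)_{\omega \in \{0,1\}^A}$, giving the claim. The positivity from Corollary~\ref{cor:Fsplit}(iv) guarantees all the square roots are of non-negative reals, so the inequality is meaningful at every stage.
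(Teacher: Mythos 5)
Your proof is correct and follows essentially the same route as the paper: split off one coordinate via Corollary~\ref{cor:Fsplit}(iii), apply the classical Cauchy--Schwarz inequality in the averaging variable $r \in \mathcal{D}_{l-1}\mathcal{P}^{A\setminus\{i\},l}(G)$, recognize the two resulting factors as $\langle(F_{0_i},F_{0_i})\rangle_{A,l}^{1/2}$ and $\langle(F_{1_i},F_{1_i})\rangle_{A,l}^{1/2}$, and iterate over the $d$ coordinate directions (not $2^{d-1}$ of them, as you write at one point --- a harmless slip, since your later description of the induction over the processed set $S \subseteq A$ is the correct count). The paper states the final iteration in one line; your explicit intermediate inequality indexed by $S$ is just a more careful packaging of the same argument.
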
 
\begin{proof} Fix $i \in A$, and split the $A$-cube $F$ into a pair $\left(F_{0_i}, F_{1_i}\right)$ of $A \backslash \{i\}$-cubes. By part  iii) of Corollary \ref{cor:Fsplit}, and then using the classical Cauchy-Schwarz inequality over the  averaging space $ \mathcal{D}_{l-1}\mathcal{P}^{A \setminus \{i\},l}(G)$, we obtain
$$ \left| \left\langle F  \right\rangle_{A,l} \right| = \left| \left\langle \left(F_{0_i}, F_{1_i}\right)  \right\rangle_{A,l} \right| = $$
$$= \left| \mathbb{E}_{r \in \mathcal{D}_{l-1}\mathcal{P}^{A \setminus \{i\},l}(G)} \left[ \left\langle   T^{r}F_{0_i} \right\rangle_{A\setminus\{i\},l-1}  \overline{ \left\langle   T^{r}F_{1_i}  \right\rangle_{A\setminus\{i\},l-1}}\right] \right| \le
$$ 
\begin{multline*}
\le \left(\mathbb{E}_{r \in \mathcal{D}_{l-1}\mathcal{P}^{A \setminus \{i\},l}(G)} \left| \left\langle   T^{r}F_{0_i} \right\rangle_{A\setminus\{i\},l-1}\right|^2\right)^{\frac{1}{2}} \cdot \\ \cdot
\left(\mathbb{E}_{r \in \mathcal{D}_{l-1}\mathcal{P}^{A \setminus \{i\},l}(G)} \left| \left\langle   T^{r}F_{1_i} \right\rangle_{A\setminus\{i\},l-1}\right|^2\right)^{\frac{1}{2}}=
\end{multline*}
$$ = \left\langle \left(F_{0_i}, F_{0_i}\right)  \right\rangle_{A,l}^{\frac{1}{2}} \left\langle \left(F_{1_i}, F_{1_i}\right)  \right\rangle_{A,l}^{\frac{1}{2}}.$$

 Iterating this decomposition over all coordinates \(i \in A\) yields the desired bound
$$
\left| \left\langle F \right\rangle_{A,l} \right| \le \prod_{\omega \in \{0,1\}^A} \left\| f_\omega \right\|_{U^{A,l}}.
$$
\end{proof}

\begin{proposition}[Minkowski inequality]  Let $f_0,f_1 : G \to \mathbb{C}$ be two functions. Then $ \|f_0+f_1\|_{U^{A,l}} \le  \|f_0\|_{U^{A,l}} +  \|f_1\|_{U^{A,l}}$.  
\end{proposition}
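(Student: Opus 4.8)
The plan is to derive the Minkowski inequality from the Cauchy-Schwarz-Gowers inequality (Proposition \ref{CSG}) together with the multilinearity properties of $\langle \cdot \rangle_{A,l}$ recorded in Corollary \ref{cor:Fsplit}, following the classical argument that proves the triangle inequality for Gowers norms. First I would expand $\|f_0+f_1\|_{U^{A,l}}^{2^d} = \left\langle (f_0+f_1)_{\omega \in \{0,1\}^A} \right\rangle_{A,l}$ using the fact that the inner product is $2^{d-1}$-linear in the even-weighted slots and $2^{d-1}$-anti-linear in the odd-weighted slots; writing $f_\omega = f_0$ or $f_1$ independently for each vertex, the expression splits as a sum over all $2^{2^d}$ functions $\epsilon : \{0,1\}^A \to \{0,1\}$:
\[
\|f_0+f_1\|_{U^{A,l}}^{2^d} = \sum_{\epsilon : \{0,1\}^A \to \{0,1\}} \left\langle \left( f_{\epsilon(\omega)} \right)_{\omega \in \{0,1\}^A} \right\rangle_{A,l}.
\]

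Next I would bound each term. By Proposition \ref{CSG},
\[
\left| \left\langle \left( f_{\epsilon(\omega)} \right)_{\omega} \right\rangle_{A,l} \right| \le \prod_{\omega \in \{0,1\}^A} \left\| f_{\epsilon(\omega)} \right\|_{U^{A,l}} = \|f_0\|_{U^{A,l}}^{\,2^d - |\epsilon|} \, \|f_1\|_{U^{A,l}}^{\,|\epsilon|},
\]
where $|\epsilon| = \#\{\omega : \epsilon(\omega) = 1\}$. Summing over all $\epsilon$ and grouping by the value $j = |\epsilon|$, which can be chosen in $\binom{2^d}{j}$ ways, gives
\[
\|f_0+f_1\|_{U^{A,l}}^{2^d} \le \sum_{j=0}^{2^d} \binom{2^d}{j} \|f_0\|_{U^{A,l}}^{\,2^d - j} \|f_1\|_{U^{A,l}}^{\,j} = \left( \|f_0\|_{U^{A,l}} + \|f_1\|_{U^{A,l}} \right)^{2^d}
\]
by the binomial theorem. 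Taking $2^d$-th roots yields the claim. I should also observe at the outset that $\|f_0+f_1\|_{U^{A,l}}$ is well defined: by Corollary \ref{cor:Fsplit}(iv) the quantity $\left\langle (f_0+f_1)_\omega \right\rangle_{A,l}$ is real and non-negative, so its $2^d$-th root makes sense.

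The only subtle point — and the place I would be most careful — is the bookkeeping in the first step: one must check that expanding $(f_0+f_1)$ simultaneously in all $2^d$ arguments is legitimate, i.e. that the $2^{d-1}$-linearity/anti-linearity is genuinely coordinatewise so that the expansion produces exactly the $2^{2^d}$ terms $\left\langle (f_{\epsilon(\omega)})_\omega \right\rangle_{A,l}$ with no cross terms or sign issues. This follows by applying Corollary \ref{cor:Fsplit}(i) repeatedly, one coordinate $i \in A$ at a time: splitting $F = (F_{0_i}, F_{1_i})$ and using linearity of $\overline{\Delta}^{\{i\}}_{h}(\cdot, \cdot)$ (respectively conjugate-linearity, tracked by the $\mathcal{C}^{|\omega|}$ factors in \eqref{disc_der}) in each slot, then inducting on $|A|$ down to the base case $d=0$ where $\langle \cdot \rangle_{\emptyset, l}$ is plainly additive. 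Since the Cauchy-Schwarz-Gowers bound is stated in terms of absolute values and the final sum is over non-negative quantities, the sign cancellations among cross terms are irrelevant and no genuine obstacle arises; the proof is essentially a formal consequence of Propositions already established.
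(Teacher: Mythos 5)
Your proposal is correct and follows essentially the same route as the paper: expand $\left\langle (f_0+f_1)_{\omega}\right\rangle_{A,l}$ by multilinearity into $2^{2^d}$ terms, bound each by the Cauchy--Schwarz--Gowers inequality of Proposition \ref{CSG}, and sum via the binomial theorem. Your added remarks on well-definedness of the norm (via Corollary \ref{cor:Fsplit}(iv)) and on the coordinatewise nature of the expansion are sound but not needed beyond what the paper already records.
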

\begin{proof}
This follows from an argument originally given by  Gowers in \cite{Gow01}. We expand   the left-hand side   by  multilinearity of the $2^d$-ary inner product and then apply the Cauchy-Schwarz-Gowers inequality to each the term to get
$$\left\langle (f_0+ f_1)_{\omega \in \{0,1\}^A}  \right\rangle_{A,l} = \left|\sum_{\epsilon \in \{0,1\}^{\{0,1\}^A}} \left\langle (f_{\epsilon(\omega)})_{\omega \in \{0,1\}^A}  \right\rangle_{A,l}\right|= $$
$$=\left|\sum_{k=0}^{2^d}\sum_{|\epsilon|=2^d-k} \left\langle (f_{\epsilon(\omega)})_{\omega \in \{0,1\}^A}  \right\rangle_{A,l}\right|\le \sum_{k=0}^{2^d}\sum_{|\epsilon|=2^d-k} \|f_0\|_{U^{A,l}}^{k}\|f_1\|_{U^{A,l}}^{2^d -k}=$$
$$
= \sum_{k=0}^{2^d} \binom{2^d}{k} \|f_0\|_{U^{A,l}}^{k}\|f_1\|_{U^{A,l}}^{2^d -k}
=( \|f_0\|_{U^{A,l}}+\|f_1\|_{U^{A,l}})^{2^d}.$$
\end{proof}

As a straightforward consequence of the Cauchy-Schwarz-Gowers inequality, we get the following inequality between uniformity norms of the same degree $l$.

\begin{proposition}
    Let  $f: G \to \mathbb{C}$ and $i \in A$. Then \[\|f\|_{U^{A,l}} \ge \|f\|_{U^{A \setminus \{i\},l}}.\]
\end{proposition}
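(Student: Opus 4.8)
The plan is to reproduce the classical monotonicity argument for Gowers norms, comparing the cube $(f)_{\omega \in \{0,1\}^A}$ with a ``mixed'' cube that is constant in the $i$-th direction. I would fix $i \in A$ and set $\widetilde F = (f_\omega)_{\omega \in \{0,1\}^A}$ with $f_\omega = f$ whenever $\omega_i = 0$ and $f_\omega = \mathbf 1$ (the constant function $1$) whenever $\omega_i = 1$. In the splitting notation of Corollary~\ref{cor:Fsplit} this is $\widetilde F = (F_{0_i}, F_{1_i})$ with $F_{0_i} = (f)_{\omega' \in \{0,1\}^{A\setminus\{i\}}}$ and $F_{1_i} = (\mathbf 1)_{\omega' \in \{0,1\}^{A\setminus\{i\}}}$.

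First I would evaluate $\langle \widetilde F \rangle_{A,l}$ using part i) of Corollary~\ref{cor:Fsplit}. The point is that $T^{p'}F_{1_i}$ is again the all-ones cube for every $p' \in \mathcal{P}^{A\setminus\{i\},l-1}(G)$, since translation fixes constant functions; hence the componentwise product $F_{0_i}\,\overline{T^{p'}F_{1_i}}$ equals $(f)_{\omega' \in \{0,1\}^{A\setminus\{i\}}}$ independently of $p'$. Therefore the expectation over $p'$ collapses and $\langle \widetilde F \rangle_{A,l} = \langle (f)_{\omega'} \rangle_{A\setminus\{i\},l} = \|f\|_{U^{A\setminus\{i\},l}}^{2^{d-1}}$. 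In particular $\langle \widetilde F \rangle_{A,l}$ is real and non-negative by Corollary~\ref{cor:Fsplit} iv).

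Next I would apply the Cauchy--Schwarz--Gowers inequality (Proposition~\ref{CSG}) to $\widetilde F$. Of its $2^d$ components, $2^{d-1}$ equal $f$ and $2^{d-1}$ equal $\mathbf 1$, and $\|\mathbf 1\|_{U^{A,l}} = 1$ straight from the definition (the derivative $\overline{\Delta}^A_{p,l}$ of the all-ones cube is identically $1$, so its average is $1$). Hence $\|f\|_{U^{A\setminus\{i\},l}}^{2^{d-1}} = \langle \widetilde F \rangle_{A,l} = \bigl| \langle \widetilde F \rangle_{A,l} \bigr| \le \|f\|_{U^{A,l}}^{2^{d-1}}$, and taking $2^{d-1}$-th roots yields $\|f\|_{U^{A\setminus\{i\},l}} \le \|f\|_{U^{A,l}}$.

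The argument is short and I do not expect a genuine obstacle; the one step deserving care is the collapse in the second paragraph, where one must check that the cube $F_{0_i}\,\overline{T^{p'}F_{1_i}}$ does not depend on $p'$ — this rests only on $T^y \mathbf 1 = \mathbf 1$ and on the componentwise product of $(f)_{\omega'}$ with the all-ones cube being $(f)_{\omega'}$ itself, with no stray weight factors coming from $|\omega'|$.
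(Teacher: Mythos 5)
Your proposal is correct and follows essentially the same route as the paper: both compare the mixed cube $(F_{0_i},F_{1_i})$ with $f$ on one face and $\mathbf 1$ on the other, collapse the average in Corollary~\ref{cor:Fsplit}~i) using $T^{p'}\mathbf 1=\mathbf 1$ to get $\|f\|_{U^{A\setminus\{i\},l}}^{2^{d-1}}$, and then bound by the Cauchy--Schwarz--Gowers inequality together with $\|\mathbf 1\|_{U^{A,l}}=1$. No gaps.
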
 
\begin{proof} Let $1:G \to \mathbb{C}$ denote the constant function $1$. Using part i) of of Corollary \ref{cor:Fsplit}, and applying Cauchy-Schwarz-Gowers inequality, we obtain 
\[ \|f\|_{U^{A \setminus \{i\},l}}^{2^{d-1}} = \mathbb{E}_{p' \in \mathcal{P}^{A \setminus \{i\},l-1}(G)} \|f\|_{U^{A \setminus \{i\},l}}^{2^{d-1}} = \]
\[=
\mathbb{E}_{p' \in \mathcal{P}^{A \setminus \{i\},l-1}(G)} \left\langle   (f)_{\omega' \in  \{0,1\}^{A\setminus\{i\} }} \overline{T^{p'}(1)_{\omega' \in  \{0,1\}^{A\setminus\{i\} }}}  \right\rangle_{A\setminus\{i\},l} = \]
\[  = 
\left\langle  \left(f\right)_{\omega' \in  \{0,1\}^{A\setminus\{i\} }}  ,\left(1\right)_{\omega' \in  \{0,1\}^{A\setminus\{i\} }} \right\rangle_{A,l} \le \|f\|_{U^{A ,l}}^{2^{d-1}} \|1\|_{U^{A,l}}^{2^{d-1}} = \|f\|_{U^{A ,l}}^{2^{d-1}}.
\]
\end{proof}

\begin{remark} The Gowers uniformity norm $U^{d+1}$ is maximized precisely by degree-$d$ phase polynomial functions. Specifically, for any $f:G \to \mathbb{C}$ with $|f(x)| \le 1$, we have $\|f\|_{U^{d+1}}=1$ if and only if $f(x)=e(P(x))$, where $P: G \to \mathbb{R}/\mathbb{Z}$ is a polynomial of degree at most $d$. 

Since the generalized uniformity norms \( U^{A,l} \) are computed by averaging over degree-\( l \) cubes in \( G \), it follows that \( \|f\|_{U^{A,l}} = 1 \) whenever \( f \) is a phase polynomial of degree at most \( d - l \). Hence, these norms are sensitive to and detect phase polynomials of degree up to $d - l$.
\end{remark}

\section{Orthogonality and Isometry} % in finite case
\label{sec:orto-fin}

We now focus on a fundamental special case: evaluating multiplicative discrete derivatives and the associated $2^d$-ary inner products on cubes of additive characters. Consider a map $(p,P) : \{0,1\}^A \to G \times \widehat{G}$, where $p=(x_\omega)\in G^{\{0,1\}^A}$ and $P=(\chi_\omega) \in {\hat{G}}^{\{0,1\}^A}$, with $d=|A|$

Recall that he multiplicative discrete derivative of order $d$ and degree $l$ is defined by 
$$ \overline{\Delta}^A_{p,l} (P)  =  \prod_{\omega \in \{0,1\}^A} \mathcal{C}^{|\omega|} \chi_\omega (x_\omega).$$

A key observation allows us to improve Claim i) in Proposition \ref{prop:dersplit} by expressing $\overline{\Delta}^A_{p,l} (P)$as a product of two multiplicative derivatives of one order lower. 

For arbitrary elements $a,b \in G$ and characters $\chi_\alpha, \chi_\beta \in \widehat{G}$, note that   
$$\chi_{\alpha}(a)\overline{\chi_\beta(b)} = \chi_{\alpha}(a)\overline{\chi_{\alpha}(b)}\chi_\alpha(b)\overline{\chi_\beta(b)} = \chi_{\alpha}(a-b)\left(\chi_{\alpha} - \chi_\beta\right)(b).$$
Similarly,  interchanging the roles of $a$, $b$ and $\alpha$,  $\beta$, yields  
$$\chi_{\alpha}(a)\overline{\chi_\beta(b)}=\left(\chi_\alpha-\chi_\beta\right)(a)\chi_\beta(a-b).$$
This computation, which appears in a related form in Lemma 2.14 of Section 2.7. in \cite{Szeg12}, underpins Proposition \ref{prop:charsplit}. Unlike the treatment there, we carry this idea  to its full logical conclusion  in  Corollary \ref{cor:split} and the Orthogonality lemma \ref{lem:orth}.

The above equality allows us to replace a pair of  point-character $A$-cubes $(p, P )$ with two pairs $( \delta_i p, P \circ s_i^0 )$ and $(p \circ s_i^1  , \delta_i P)$ of $A \backslash \{i\}$-cubes.

\begin{proposition}
\label{prop:charsplit}
Let $(p,P) : \{0,1\}^A \to G \times \widehat{G}$ be a point-character $A$-cube with \(p \in \mathcal{P}^{A,l}(G)\). Then for any $i \in A$, we have the decomposition
$$ \overline{\Delta}^A_{p,l} (P)  =  \overline{\Delta}^{A \setminus \{i\}}_{\delta_i p,l-1}(P \circ s_i^0 ) \, \overline{\Delta}^{A \setminus \{i\}}_{p \circ s_i^1 ,l} (\delta_i P). $$
\end{proposition}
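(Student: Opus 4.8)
The plan is to expand both sides of the claimed identity directly from the definition \eqref{disc_der} of $\overline{\Delta}$, factor the product over $\{0,1\}^A$ into a product over the pairs $\{(\omega',0_i),(\omega',1_i)\}$ indexed by $\omega' \in \{0,1\}^{A\setminus\{i\}}$, and apply the pointwise character identity $\chi_{\alpha}(a)\overline{\chi_\beta(b)} = \left(\chi_\alpha-\chi_\beta\right)(a)\,\chi_\beta(a-b)$ — the second of the two displayed elementary identities — to each such pair. Concretely, write $p=(x_\omega)$, $P=(\chi_\omega)$, and for each $\omega'$ set $a = x_{(\omega',0_i)}$, $b = x_{(\omega',1_i)}$, $\chi_\alpha = \chi_{(\omega',0_i)}$, $\chi_\beta = \chi_{(\omega',1_i)}$. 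Then $a - b = (\delta_i p)(\omega')$, $\chi_\alpha - \chi_\beta = (\delta_i P)(\omega')$, and $b = (p\circ s_i^1)(\omega')$, $\chi_\beta = (P\circ s_i^0)(\omega')$.

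The main step is the bookkeeping of the conjugation signs. In $\overline{\Delta}^A_{p,l}(P) = \prod_{\omega} \mathcal{C}^{|\omega|}\chi_\omega(x_\omega)$, group the factor with $\omega_i=0$ (sign $\mathcal{C}^{|\omega'|}$) with the factor with $\omega_i=1$ (sign $\mathcal{C}^{|\omega'|+1}$) to get, for each $\omega'$, the expression $\mathcal{C}^{|\omega'|}\left[\chi_{(\omega',0_i)}(x_{(\omega',0_i)})\,\overline{\chi_{(\omega',1_i)}(x_{(\omega',1_i)})}\right]$. Applying the identity turns the bracket into $(\delta_i P)(\omega')\big((\delta_i p)(\omega')\big)\cdot (P\circ s_i^0)(\omega')\big((p\circ s_i^1)(\omega')\big)$, so that
\[
\overline{\Delta}^A_{p,l}(P) = \prod_{\omega' \in \{0,1\}^{A\setminus\{i\}}} \mathcal{C}^{|\omega'|}\Big[(\delta_i P)(\omega')\big((\delta_i p)(\omega')\big)\Big]\cdot \mathcal{C}^{|\omega'|}\Big[(P\circ s_i^0)(\omega')\big((p\circ s_i^1)(\omega')\big)\Big],
\]
and the two products are precisely $\overline{\Delta}^{A\setminus\{i\}}_{\delta_i p,\,l-1}(P\circ s_i^0)$ and $\overline{\Delta}^{A\setminus\{i\}}_{p\circ s_i^1,\,l}(\delta_i P)$ by definition \eqref{disc_der}. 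Here I would note that both products separate cleanly because each factor depends only on $\omega'$, so the single product over $\{0,1\}^A$ factors as a product of two products over $\{0,1\}^{A\setminus\{i\}}$ with no cross terms.

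I should also verify the degree labels attached to the two new $\overline{\Delta}$ operators are legitimate, i.e. that the base points lie in the claimed cube spaces: $\delta_i p \in \mathcal{P}^{A\setminus\{i\},\,l-1}(G)$ and $p\circ s_i^1 \in \mathcal{P}^{A\setminus\{i\},\,l}(G)$ both follow from $p \in \mathcal{P}^{A,l}(G)$ by Proposition \ref{prop:dBshift} (parts ii and i respectively, with $B=\{i\}$), exactly as used in Proposition \ref{prop:dersplit}(i). No positivity or averaging is involved — this is a pointwise algebraic identity — so there is no real obstacle beyond keeping the $\mathcal{C}$-exponents straight; the only place to be careful is that the conjugation on the $\omega_i=1$ factor is what converts $\chi_{(\omega',1_i)}(x_{(\omega',1_i)})$ into the $\overline{\chi_\beta(b)}$ needed to invoke the identity in the stated orientation. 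Finally I would remark that using the first elementary identity $\chi_\alpha(a)\overline{\chi_\beta(b)} = \chi_\alpha(a-b)(\chi_\alpha-\chi_\beta)(b)$ instead would yield the symmetric decomposition with the roles of $p$ and $P$ swapped, which is consistent with the $(p,P)$-symmetry underlying Identity \eqref{ParsId}.
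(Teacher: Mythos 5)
Your overall strategy --- expand $\overline{\Delta}^A_{p,l}(P)$ from \eqref{disc_der}, pair the $\omega_i=0$ and $\omega_i=1$ factors over each $\omega'$, apply one of the two elementary character identities, and regroup --- is exactly the paper's proof. However, the execution goes wrong precisely at the step you single out as the crux. With your assignments $a=x_{(\omega',0_i)}$, $b=x_{(\omega',1_i)}$, $\chi_\alpha=\chi_{(\omega',0_i)}=(P\circ s_i^0)(\omega')$, $\chi_\beta=\chi_{(\omega',1_i)}=(P\circ s_i^1)(\omega')$ (note that your line ``$\chi_\beta=(P\circ s_i^0)(\omega')$'' already contradicts your own choice $\chi_\beta=\chi_{(\omega',1_i)}$), the \emph{second} identity $\chi_\alpha(a)\overline{\chi_\beta(b)}=(\chi_\alpha-\chi_\beta)(a)\,\chi_\beta(a-b)$ turns the bracket into
\[
(\delta_i P)(\omega')\bigl((p\circ s_i^0)(\omega')\bigr)\cdot (P\circ s_i^1)(\omega')\bigl((\delta_i p)(\omega')\bigr),
\]
which regroups to $\overline{\Delta}^{A\setminus\{i\}}_{\delta_i p,\,l-1}(P\circ s_i^1)\,\overline{\Delta}^{A\setminus\{i\}}_{p\circ s_i^0,\,l}(\delta_i P)$ --- a valid identity, but not the one asserted in the Proposition. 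The bracket you actually display, $(\delta_i P)(\omega')\bigl((\delta_i p)(\omega')\bigr)\cdot(P\circ s_i^0)(\omega')\bigl((p\circ s_i^1)(\omega')\bigr)$, is produced by neither identity and is simply not equal to $\chi_\alpha(a)\overline{\chi_\beta(b)}$: it expands to $\chi_\alpha(a)\overline{\chi_\beta(a)}\chi_\beta(b)$, which differs by the factor $\chi_\beta(2b-a)$. Your subsequent identification of the two products with $\overline{\Delta}^{A\setminus\{i\}}_{\delta_i p,l-1}(P\circ s_i^0)$ and $\overline{\Delta}^{A\setminus\{i\}}_{p\circ s_i^1,l}(\delta_i P)$ then pairs each character with the wrong base point relative to your own display.

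The repair is immediate and is what the paper does: apply the \emph{first} identity $\chi_\alpha(a)\overline{\chi_\beta(b)}=\chi_\alpha(a-b)\,(\chi_\alpha-\chi_\beta)(b)$, which yields the bracket $(P\circ s_i^0)(\omega')\bigl((\delta_i p)(\omega')\bigr)\cdot(\delta_i P)(\omega')\bigl((p\circ s_i^1)(\omega')\bigr)$ and hence exactly the stated factorization. Your closing remark therefore has the roles of the two identities reversed: the first proves the Proposition as stated, while the second gives the $s_i^0\leftrightarrow s_i^1$-swapped variant. The remaining ingredients of your argument --- the clean splitting of the product over $\{0,1\}^A$ into two products over $\{0,1\}^{A\setminus\{i\}}$, and the degree bookkeeping $\delta_i p\in\mathcal{P}^{A\setminus\{i\},l-1}(G)$, $p\circ s_i^1\in\mathcal{P}^{A\setminus\{i\},l}(G)$ via Proposition \ref{prop:dBshift} --- are correct and agree with the paper.
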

\begin{proof} Since $p \in  \mathcal{P}^{A,l}(G)$, we know  $\delta_i p \in  \mathcal{P}^{A \setminus \{i\},l-1}(G)$ and $p \circ s_i^1  \in  \mathcal{P}^{A \setminus \{i\},l}(G)$, as established  in \eqref{Pdecomp}.

We use the natural identification $\{0,1\}^A = \{0,1\}^{\{i\} } \times \{0,1\}^{A \setminus \{ i\} }$, and rewrite the multiplicative discrete derivative as
$$  \overline{\Delta}^A_{p,l} (P) = \prod_{\omega \in \{0,1\}^A} \mathcal{C}^{|\omega|} \chi_\omega (x_\omega) = $$
$$ = \prod_{\omega' \in \{0,1\}^{A \setminus \{i\}}} \prod_{\omega_i \in \{0,1\}^{\{i\}}} \mathcal{C}^{|\omega'|+|\omega_i|} \chi_{(\omega', \omega_i)} (x_{(\omega', \omega_i )}) = $$
$$ = 
 \prod_{\omega' \in \{0,1\}^{A \setminus \{i\}}}  \mathcal{C}^{|\omega'|} \left[ \chi_{(\omega', 0_i)} (x_{(\omega', 0_i )}) \overline{\chi_{(\omega', 1_i)} (x_{(\omega', 1_i )})} \right]= $$
 $$= \prod_{\omega' \in \{0,1\}^{A \setminus \{i\}}}  \mathcal{C}^{|\omega'|} \left[ \chi_{(\omega', 0_i)} (x_{(\omega', 0_i)} - x_{(\omega', 1_i)} ) \left(\chi_{(\omega', 0_i)} - \chi_{(\omega', 1_i)}\right) (x_{(\omega', 1_i )}) \right]= 
$$
$$ = \left[\prod_{\omega' \in \{0,1\}^{A \setminus \{i\}}}  \mathcal{C}^{|\omega'|} \chi_{s_i^0(\omega')}((\delta_i p)_{\omega'})\right]  \left[\prod_{\omega' \in \{0,1\}^{A \setminus \{i\}}}  \mathcal{C}^{|\omega'|} (\delta_i\chi)_{\omega'}(x_{s_i^1 (\omega')})\right]=
$$
$$=   \overline{\Delta}^{A \setminus \{i\}}_{\delta_i p,l-1}(P \circ s_i^0  ) \, \overline{\Delta}^{A \setminus \{i\}}_{p \circ s_i^1  ,l} (\delta_i P).$$

\end{proof}

As a corollary, after averaging over $\mathcal{P}^{A,l}(G) \cong  \mathcal{P}^{A\setminus\{i\},l}(G) \times \mathcal{P}^{A\setminus\{i\},l-1}(G)$,  we obtain the following.

\begin{corollary}
\label{cor:split}
For a character $A$-cube  $P : \{0,1\}^A \to \widehat{G}$, the value of the inner product $\left\langle P \right\rangle_{A,l}$ can be expressed as
$$ \left\langle P \right\rangle_{A,l}  = 
\left\langle \delta_i P \right\rangle_{A\setminus \{i\},l} 
\left\langle P \circ s_i^0 \right\rangle_{A \setminus \{i\},l-1}.$$
\end{corollary}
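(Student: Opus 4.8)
The plan is to prove Corollary \ref{cor:split} by averaging the pointwise identity from Proposition \ref{prop:charsplit} over the space $\mathcal{P}^{A,l}(G)$ and using the product decomposition of that space. First I would recall from \eqref{Pdecomp} (the split short exact sequence) that the assignment
$$p \mapsto \left[\, p \circ s_i^1,\; \delta_i p \,\right]$$
is a group isomorphism $\mathcal{P}^{A,l}(G) \xrightarrow{\ \sim\ } \mathcal{P}^{A\setminus\{i\},l}(G) \times \mathcal{P}^{A\setminus\{i\},l-1}(G)$, so that averaging a function over $\mathcal{P}^{A,l}(G)$ equals the iterated average over the two factors, with $p \circ s_i^1$ ranging over $\mathcal{P}^{A\setminus\{i\},l}(G)$ and $\delta_i p$ ranging independently over $\mathcal{P}^{A\setminus\{i\},l-1}(G)$.

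Next, starting from the definition \eqref{in_prod_hatG} (for $\widehat G$-cubes this is a sum over $\mathcal{P}^{A,l}(\widehat{G})$, but the identical reasoning applies; I will write $\mathbb{E}$ for uniformity and note the sum case is identical), I substitute Proposition \ref{prop:charsplit}:
$$\left\langle P \right\rangle_{A,l} = \mathbb{E}_{p \in \mathcal{P}^{A,l}(G)} \overline{\Delta}^A_{p,l}(P) = \mathbb{E}_{p \in \mathcal{P}^{A,l}(G)} \overline{\Delta}^{A \setminus \{i\}}_{\delta_i p,\,l-1}(P \circ s_i^0) \cdot \overline{\Delta}^{A \setminus \{i\}}_{p \circ s_i^1,\,l}(\delta_i P).$$
The crucial structural point is that the first factor depends on $p$ only through $\delta_i p$, while the second factor depends on $p$ only through $p \circ s_i^1$. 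Since these two quantities are the components of $p$ under the isomorphism \eqref{Pdecomp} and thus vary independently and uniformly over their respective spaces, the expectation of the product factors as a product of expectations:
$$\left\langle P \right\rangle_{A,l} = \left(\mathbb{E}_{q \in \mathcal{P}^{A\setminus\{i\},l-1}(G)} \overline{\Delta}^{A \setminus \{i\}}_{q,\,l-1}(P \circ s_i^0)\right)\left(\mathbb{E}_{q' \in \mathcal{P}^{A\setminus\{i\},l}(G)} \overline{\Delta}^{A \setminus \{i\}}_{q',\,l}(\delta_i P)\right),$$
and recognizing each factor via Definition \ref{def:in_prod} gives $\left\langle P \circ s_i^0 \right\rangle_{A\setminus\{i\},l-1}$ and $\left\langle \delta_i P \right\rangle_{A\setminus\{i\},l}$ respectively, which is the claimed identity.

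The main obstacle — really the only substantive point — is justifying that the two dependencies are genuinely independent in the averaging, i.e.\ that pushing forward the uniform (or counting) measure on $\mathcal{P}^{A,l}(G)$ along the isomorphism \eqref{Pdecomp} yields the product of the uniform (counting) measures on the two factors. This is immediate because \eqref{Pdecomp} is an isomorphism of finite abelian groups, so it carries uniform measure to uniform measure, and the uniform measure on a product group is the product measure; in the $\widehat G$-case it carries counting measure to counting measure and the counting measure on a product is likewise the product. One should also take a moment to note that $\delta_i P \in \widehat{G}^{\{0,1\}^{A\setminus\{i\}}}$ and $P \circ s_i^0 \in \widehat{G}^{\{0,1\}^{A\setminus\{i\}}}$ are well-defined $(d-1)$-cubes of characters (no degree constraint on $P$ itself is used — only $p \in \mathcal{P}^{A,l}(G)$ matters, which is exactly what the averaging ranges over), so that the right-hand inner products make sense. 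Everything else is bookkeeping with the product structure.
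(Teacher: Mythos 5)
Your proposal is correct and follows essentially the same route as the paper: substitute the pointwise factorization of Proposition \ref{prop:charsplit} into the average defining $\left\langle P \right\rangle_{A,l}$, use the isomorphism \eqref{Pdecomp} to split $\mathcal{P}^{A,l}(G)$ into the product $\mathcal{P}^{A\setminus\{i\},l}(G) \times \mathcal{P}^{A\setminus\{i\},l-1}(G)$, and factor the expectation of the product into a product of expectations. Your added justification that the isomorphism carries uniform measure to the product of uniform measures is exactly the point the paper leaves implicit, and your parenthetical about the $\widehat{G}$-sum is immaterial here since $\left\langle P\right\rangle_{A,l}$ for a cube of characters is computed as an average over $\mathcal{P}^{A,l}(G)$, which is what your displayed equations correctly use.
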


\begin{proof}
Using the definition of the $2^d$-ary inner product of degree $l$, Proposition \ref{prop:charsplit}, and the decomposition $\mathcal{P}^{A,l}(G) \cong  \mathcal{P}^{A\setminus\{i\},l}(G) \times \mathcal{P}^{A\setminus\{i\},l-1}(G)$, we obtain

$$\left\langle P \right\rangle_{A,l} = \mathbb{E}_{p \in \mathcal{P}^{A,l}(G)} \overline{\Delta}^A_{p,l} (P) = $$ 
$$= \mathbb{E}_{[p \circ s_i^1   ,\delta_i p] \in \mathcal{P}^{A\setminus\{i\},l}(G) \times \mathcal{P}^{A\setminus\{i\},l-1}(G)}   \left[  \overline{\Delta}^{A \setminus \{i\}}_{p\circ s_i^1 ,l} (\delta_i P) \, \overline{\Delta}^{A \setminus \{i\}}_{\delta_i p,l-1}(P \circ s_i^0  ) \right] = 
$$
$$= \left[ \mathbb{E}_{p \circ s_i^1   \in \mathcal{P}^{A\setminus\{i\},l}(G)}     \overline{\Delta}^{A \setminus \{i\}}_{p \circ s_i^1  ,l} (\delta_i P)\right ]  \left[  \mathbb{E}_{\delta_i p  \in \mathcal{P}^{A\setminus\{i\},l-1}(G)}   \overline{\Delta}^{A \setminus \{i\}}_{\delta_i p,l-1}(P \circ s_i^0  ) \right] = $$
$$ = \left\langle \delta_i P \right\rangle_{A\setminus \{i\},l} \,
\left\langle P \circ s_i^0 \right\rangle_{A \setminus \{i\},l-1}.$$

\end{proof}

\begin{lemma}
\label{lem:orth}
 {\it (Orthogonality lemma)}
Let $P$ be an  $A$-cube of characters on $G$ of degree $k\in \{-1, 0, \dots, d \}$. Then $P$  satisfies the following orthonormality condition with respect to the $2^d$-ary inner product of degree $l\in \{-1, 0, \dots, d \}$:
$$ \langle P \rangle_{A,l} = \begin{cases} 1, & \text{if } k + l < d, \\ 0, & \text{if } k + l \ge d. \end{cases}$$
\end{lemma}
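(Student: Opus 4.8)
The plan is to induct on $d = |A|$ using Corollary~\ref{cor:split}, which lets us peel off one coordinate $i \in A$ and write
$$ \langle P \rangle_{A,l} = \langle \delta_i P \rangle_{A \setminus \{i\}, l} \cdot \langle P \circ s_i^0 \rangle_{A \setminus \{i\}, l-1}. $$
The two new cubes live on a $(d-1)$-dimensional label set, so to invoke the inductive hypothesis I need to control their degrees. By Proposition~\ref{prop:dBshift}(i), restricting to a face preserves degree, so if $P$ has degree $k$ then $P \circ s_i^0$ is a character cube of degree $\le k$; and by Proposition~\ref{prop:dBshift}(ii), $\delta_i$ shifts degree down by one, so $\delta_i P$ has degree $\le k - 1$. (Here I should note that a character cube of degree $\le m$ satisfies $\langle \cdot \rangle$ equal to $1$ when the degree bound plus $l$ is below $d$ — so I really want to track the \emph{exact} degree, or else argue the statement only depends on an upper bound together with a matching lower bound; the cleanest route is to observe that for character cubes the relevant quantity is governed by when $\delta_B P$ becomes trivial for $|B| = l+1$, which I address below.)

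\textbf{Base cases and the trivial direction.} For $d = 0$: $A = \emptyset$, a ``cube'' is a single character $\chi \in \widehat{G}$, and $\langle \chi \rangle_{\emptyset, l}$ for $l \in \{-1, 0\}$ evaluates (as in the Example in Section~\ref{sec:norms}) to $\chi(0) = 1$ when $l = -1$ and to $\mathbb{E}_{x} \chi(x) = \mathbf{1}[\chi = 1]$ when $l = 0$; matching this against $k \in \{-1, 0\}$ with the rule ``$1$ iff $k + l < 0$'' requires $k = l = -1$, i.e.\ the trivial character cube, which is exactly the degree-$(-1)$ case, and the degree-$0$ case ($\chi$ arbitrary, possibly trivial) gives $0$ unless $\chi = 1$ — consistent. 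More usefully, the ``$k + l \ge d$, hence $0$'' branch has a direct proof I would give first: if $P$ has degree $k$ and $l + 1 > k$... no — rather, when $k + l \ge d$ one can find $B \subseteq A$ with $|B| = l+1$ such that $\delta_B P$ is a \emph{nontrivial} character cube on $A \setminus B$, and averaging a nontrivial character over the group kills the inner product. Concretely: unfolding the definition, $\langle P \rangle_{A,l} = \mathbb{E}_{p \in \mathcal{P}^{A,l}(G)} \prod_\omega \mathcal{C}^{|\omega|}\chi_\omega(x_\omega)$, and because $p$ ranges over a \emph{subgroup}, the average is $1$ if the multiplicative character $p \mapsto \prod_\omega \mathcal{C}^{|\omega|}\chi_\omega(x_\omega)$ of $\mathcal{P}^{A,l}(G)$ is trivial and $0$ otherwise — so the whole lemma reduces to: \emph{this character of $\mathcal{P}^{A,l}(G)$ is trivial precisely when $k + l < d$.}

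\textbf{The two halves via the subgroup-character reformulation.} For the vanishing half ($k + l \ge d$): pick $B$ with $|B| = l+1$ achieving degree drop so that $\delta_B P$ has degree exactly $k - (l+1) = k - l - 1 \ge d - l - 1 - ... $; iterating Corollary~\ref{cor:split} $l+1$ times along the coordinates of such a $B$ expresses $\langle P \rangle_{A,l}$ as a product one of whose factors is $\langle \delta_B P \rangle_{A \setminus B, -1} = (\delta_B P)(0_{A\setminus B}) $ evaluated... actually the cleanest is: after stripping $l+1$ coordinates the degree-$l$ constraint becomes a degree-$(-1)$ constraint, and $\langle Q \rangle_{A', -1} = \prod_\omega \mathcal{C}^{|\omega|}\chi^Q_\omega(0) = 1$ always — so I instead want the factor that is an average of a genuinely nontrivial character; this comes from the \emph{other} factors $\langle P \circ s_i^0 \rangle_{\cdot, l - 1}$ and ultimately from the fact that $P$ having degree $\ge d - l$ means it is \emph{not} annihilated by all $\delta_B$ with $|B| = l + 1$, so the homomorphism $\mathcal{P}^{A,l}(G) \to \mathbb{C}^\times$ above is nontrivial because $\mathcal{P}^{A,l}(G)$ contains, via Proposition~\ref{prop:dBshift}, cubes whose $\delta_B$-image with $|B| = l+1$ detects the nontriviality of $\delta_B P$. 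For the ``$= 1$'' half ($k + l < d$): I claim $\delta_B P$ is the trivial character cube for every $|B| = l + 1$, because $\delta_B P$ would be a character cube of degree $k - l - 1 < d - l - 1 - (l+1 - ...) $ — more simply, degree $< 0$ is impossible unless it is the zero/trivial cube; hence the homomorphism is trivial and the average is $1$.

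\textbf{Main obstacle.} The crux — and where I would spend the real work — is the vanishing direction: showing that when $\mathrm{deg}(P) = k$ with $k + l \ge d$, the character $p \mapsto \overline{\Delta}^A_{p,l}(P)$ of the \emph{group} $\mathcal{P}^{A,l}(G)$ is nontrivial. The recursive factorization from Corollary~\ref{cor:split} handles the bookkeeping, but the base of that recursion needs the precise statement that a character cube $P$ of degree exactly $k$ restricted appropriately does not become trivial prematurely — i.e.\ I must pin down how ``degree of a character cube'' interacts with the $\delta_B$'s, presumably by the dual analogue of the corner-reconstruction discussion (a degree-$k$ character cube is determined by, and nontrivial on, its $k$-dimensional corner). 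I expect the honest version of the argument to first prove a lemma: \emph{a character $A$-cube $P$ has degree $\le k$ iff $\delta_B P$ is the trivial cube for all $|B| = k+1$ iff $\langle P \rangle_{A, m} = 1$ for all $m \le d - k - 1$}, and then the stated lemma is the contrapositive packaging of this, combined with the subgroup-averaging triviality-or-zero dichotomy. The induction via Corollary~\ref{cor:split} then just propagates these equivalences cleanly from $d-1$ to $d$, with the degree split $k = \max(\mathrm{deg}(\delta_i P) + 1, \mathrm{deg}(P\circ s_i^0))$ (choosing $i$ to realize the degree) being the one bit of combinatorics to verify.
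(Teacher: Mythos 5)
Your overall strategy --- induction on $d$ via the factorization $\langle P \rangle_{A,l} = \langle \delta_i P \rangle_{A\setminus\{i\},l}\,\langle P\circ s_i^0\rangle_{A\setminus\{i\},l-1}$ of Corollary \ref{cor:split} --- is exactly the paper's, and you correctly flag the one subtlety the paper itself glosses over: Proposition \ref{prop:dBshift} is a statement about images of the spaces $\mathcal{P}^{A,l}$, not about individual cubes, so one cannot assert that $\delta_i P$ has degree exactly $k-1$; the correct bookkeeping is $k=\max\bigl(\deg(P\circ s_i^0),\,\deg(\delta_i P)+1\bigr)$, which follows from the splitting \eqref{Pdecomp} and holds for \emph{every} $i$ (no choice of $i$ is needed), and a short case check then shows the product of the two inductively-known factors is $1$ precisely when $k+l<d$. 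Your closing paragraph states this plan correctly, and your ``subgroup-character dichotomy'' (the average of the character $p\mapsto\overline{\Delta}^A_{p,l}(P)$ over the subgroup $\mathcal{P}^{A,l}(G)$ is $1$ if it is trivial and $0$ otherwise) is a clean and correct reformulation that anticipates the signed-duality statement $\mathcal{P}^{A,l}(G)^{\perp_{\mathrm{sign}}}=\mathcal{P}^{A,d-l-1}(\widehat{G})$ proved in Section \ref{sec:Poisson}.

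However, as written the proposal has genuine gaps. First, the claim in your ``two halves'' paragraph that $k+l<d$ forces $\delta_B P$ to be the trivial cube for every $|B|=l+1$ is false: that condition says $P\in\mathcal{P}^{A,l}(\widehat{G})$, i.e.\ $k\le l$, which is not implied by $k\le d-l-1$ (take $d=3$, $l=0$, $k=2$: the lemma gives $\langle P\rangle_{A,0}=1$, yet $\delta_i P$ is a nontrivial character cube for some $i$). The triviality of the restricted character on $\mathcal{P}^{A,l}(G)$ is governed by $\delta_B$ for $|B|=d-k$, not $|B|=l+1$, so this route as stated does not close either half. Second, you explicitly defer the vanishing direction (``where I would spend the real work'') and only conjecture the auxiliary lemma needed to finish; reducing the statement to ``the character is trivial iff $k+l<d$'' restates the lemma rather than proving it. Third, the induction needs its base and boundary cases supplied: the factorization argument does not cover $l=-1$, $l=d$, or $k=-1$ (the inductive hypothesis on $A\setminus\{i\}$ only covers degrees up to $d-1$), and the paper handles these separately, together with an explicit verification for $d\le 1$ resting on orthogonality of characters. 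With the $\max$ formula made precise and these cases filled in, your induction does go through and coincides with the paper's proof; without them it is an outline with one incorrect step rather than a proof.
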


\begin{proof}
We will proceed by induction on the dimension  $d = |A|$. The case $A=\emptyset$ is left as an exercise, and we will discuss  the situation for 
 $d=1$ in more detail.  
 
Consider  the set of $1$-dimensional cubes in $\widehat{G}$, which are pairs of characters on $G$. This set is subject to a a filtration $\{I\}=\mathcal{P}^{A,-1}(\widehat{G}) \subset \mathcal{P}^{A,0}(\widehat{G}) \subset \mathcal{P}^{A,1}(\widehat{G}) = \widehat{G}^{\{0,1\}^{A}}$.

The set $\mathcal{P}^{A,-1}(\widehat{G})$ of cubes of degree $-1$ contains only one pair of trivial characters $(1_0,1_1)$. We can verify that for all possible choices of $l \in \{-1,0,1\}$, the claim holds:
$$ \langle \left(1_0,1_1\right) \rangle_{A,-1} = 1_0(0)\overline{1_1(0)} = 1,$$
$$ \langle \left(1_0,1_1\right) \rangle_{A,0} = \mathbb{E}_{x \in G}1_0(x)\overline{1_1(x)} = 1,$$
$$ \langle \left(1_0,1_1\right) \rangle_{A,1} = \mathbb{E}_{x_0,x_1 \in G}1_0(x_0)\overline{1_1(x_1)} = \mathbb{E}_{x \in G}1_0(x) \, \overline{\mathbb{E}_{x \in G}1_1(x)}= 1.$$

An $A$-cube $P$ is of degree 0  whenever it is a nontrivial constant map, i.e., $\chi_0 = \chi_1 \neq 1$. 
In this case, the claim follows from the fact that nontrivial characters have a unit $L_2$ norm and average to zero over $G$:
$$ \langle \left(\chi,\chi\right) \rangle_{A,-1} = \chi(0)\overline{\chi(0)} = 1,$$
$$ \langle \left(\chi, \chi\right) \rangle_{A,0} = \mathbb{E}_{x \in G}\chi(x)\overline{\chi(x)} = 1,$$
$$ \langle \left(\chi,\chi\right) \rangle_{A,1} = \mathbb{E}_{x_0,x_1 \in G}\chi(x_0)\overline{\chi(x_1)} = \mathbb{E}_{x \in G}\chi(x) \, \overline{\mathbb{E}_{x \in G}\chi(x)}= 0.$$

The case $k=1$ corresponds to pairs $P=(\chi_0, \chi_1)$ where $\chi_0 \neq \chi_1$. For such pairs, the claim follows from the orthogonality of characters in the standard Hermitian inner product on $\mathbb{C}^G$. Specifically, at least one character in a non-constant pair must average to zero over $G$:
$$\langle (\chi_0,\chi_1)\rangle_{A,-1} =  \chi_0(0)\overline{\chi_1(0) }= 1, $$
$$\langle (\chi_0,\chi_1)\rangle_{A,0} = \mathbb{E}_{x\in G} \chi_0(x)\overline{\chi_1(x) }= 0, $$
$$ \langle \left(\chi_0,\chi_1\right) \rangle_{A,1} = \mathbb{E}_{x_0,x_1 \in G}\chi_0(x_0)\overline{\chi_1(x_1)} = \mathbb{E}_{x \in G}\chi_0(x) \, \overline{\mathbb{E}_{x \in G}\chi_1(x)}= 0.$$

Similarly, for  $|A|=d>1$ and $l=-1$,  we  observe that $\langle P \rangle_{A,-1} =1$ for all $P \in \widehat{G}^{\{0,1\}^{A}}$. This follows because when $l = -1$, the multiplicative discrete derivative $\overline{\Delta}^A_{p,-1}(P)$ is evaluated solely at the $2^d$-dimensional zero point $p =0 \in \mathcal{P}^{A,-1}( G)$. 

On the other extreme, when $l=d$, we average over the entire space ${G}^{{\{0,1\}}^A}$ and can interchange the order of averaging and multiplication  
$$\langle P \rangle_{A,d} =
 \mathbb{E}_{p \in {G}^{{\{0,1\}}^A}}\prod_{\omega \in \{0,1\}^A} \mathcal{C}^{|\omega|} \chi_\omega (x_\omega) = 
\prod_{\omega \in \{0,1\}^A} \mathcal{C}^{|\omega|} \mathbb{E}_{x \in G}\chi_\omega (x).$$ 
This expression is nonzero if and only if $P$ is the $A$-cube of trivial characters  $I \in \mathcal{P}^{A,-1}(\widehat{G})$, i.e., has degree $k=-1$. In this case, the average equals $1$.

For $k=-1$, all multiplicative discrete derivatives of the trivial $A$-cube of characters $I$ are equal to one. Hence, after averaging over any $\mathcal{P}^{A,l}(G)$, we get  
$$\langle I \rangle_{A,l} = \mathbb{E}_{p \in  \mathcal{P}^{A,l}(G)} \overline{\Delta}^A_{p,l} (I) = 1.$$

Now, we may assume that $d\ge 2$, $k\in \{0,1, \dots d\} $,  $l\in \{0,1, \dots d-1\}$ and proceed with the induction step. 

Consider an $A$-cube of characters $P$ of degree $k$. Then, by Proposition \ref{prop:dBshift}, $\delta_i P$ is an $A\backslash  \{i\}$-cube of characters of degree $k-1$ and $ P \circ s_i^0$ is an $A\backslash \{i\}$-cube of characters of degree $k$. By induction hypothesis, we have
$$\left\langle \delta_i P \right\rangle_{A\setminus \{i\},l} = \begin{cases} 1, & \text{if } k-1 +l  < d -1, \\ 0, & \text{otherwise,} \end{cases},$$
and
$$\left\langle P \circ s_i^0 \right\rangle_{A\setminus \{i\},l-1} = \begin{cases} 1, & \text{if } k +l -1 < d -1, \\ 0, & \text{otherwise.} \end{cases}$$
Using Corollary \ref{cor:split}, it follows 
$$ \left\langle P \right\rangle_{A,l}  = 
\left\langle \delta_i P \right\rangle_{A\setminus \{i\},l} 
\left\langle P \circ s_i^0 \right\rangle_{A \setminus \{i\},l-1} =\begin{cases} 1, & \text{if } k + l  < d, \\ 0, & \text{otherwise.} \end{cases}
$$
\end{proof}

\begin{remark}
We note  that a variant of the orthogonality lemma also applies to infinite compact abelian groups, such as $\mathbb{R}/\mathbb{Z}$. The finiteness of $G$ is not essential in the proof of Proposition \ref{prop:charsplit}. In the proof of Corollary \ref{cor:split}, the key requirement is that the double integral --- taken as an average over the compact space $\mathcal{P}^{A,l}(G)$ --- factorizes into two single integrals over the compact spaces $\mathcal{P}^{A\setminus \{i\},l}(G)$ and 
$\mathcal{P}^{A\setminus \{i\},l-1}(G)$. This factorization holds because the multiplicative discrete derivative of a cube of additive characters is continuous and bounded by 1. Finally, Lemma \ref{lem:orth} relies on the fact that additive characters from $\widehat{G}$ form an orthonormal basis of the Hilbert space $L_2(G)$.
\end{remark}

\begin{theorem}[Proof of Identity \eqref{ParsId}]
\label{thm:f=fhat}
Let $F$ be an $A$-cube of complex-valued functions on $G$, $F: G^{\{0,1\}^A} \to \mathbb{C}$, and let $\widehat{F}$ be its Fourier transform, $\widehat{F}: \widehat{G}^{\{0,1\}^A} \to \mathbb{C}$, with $F=\left( f_\omega\right)_{\omega \in \{0,1\}^A}$ and $\widehat{F}=\left( \widehat{f}_\omega\right)_{\omega \in \{0,1\}^A}$, where  $\widehat{(f_\omega)} = \widehat{f}_\omega$ for all $\omega \in \{0,1\}^A$. Then the $2^d$-ary inner products on $\mathbb{C}^G$ and $\mathbb{C}^{\widehat{G}}$ of degrees $l$ and $d-l-1$ satisfy 
$$\left\langle F \right\rangle_{A,l} =  \left\langle \widehat{F}
\right\rangle_{A,d-l-1} .
$$
\end{theorem}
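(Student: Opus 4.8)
The plan is to expand both sides in terms of characters and reduce everything to the Orthogonality Lemma \ref{lem:orth}. Write each function via its Fourier inversion, $f_\omega(x) = \sum_{\chi_\omega \in \widehat{G}} \widehat{f}_\omega(\chi_\omega)\, \chi_\omega(x)$, so that the multiplicative discrete derivative $\overline{\Delta}^A_{p,l}(F)$ becomes a sum, over all character $A$-cubes $P = (\chi_\omega)_{\omega \in \{0,1\}^A} \in \widehat{G}^{\{0,1\}^A}$, of the scalar $\prod_{\omega} \mathcal{C}^{|\omega|}\widehat{f}_\omega(\chi_\omega)$ times the character expression $\overline{\Delta}^A_{p,l}(P) = \prod_\omega \mathcal{C}^{|\omega|}\chi_\omega(x_\omega)$. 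Averaging over $p \in \mathcal{P}^{A,l}(G)$ and interchanging the (finite) sums with the expectation gives
$$\left\langle F \right\rangle_{A,l} = \sum_{P \in \widehat{G}^{\{0,1\}^A}} \left( \prod_{\omega \in \{0,1\}^A} \mathcal{C}^{|\omega|}\widehat{f}_\omega(\chi_\omega)\right) \left\langle P \right\rangle_{A,l}.$$

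First I would observe that, since $\widehat{F} = (\widehat{f}_\omega)$ is exactly the coefficient cube, the product $\prod_\omega \mathcal{C}^{|\omega|}\widehat{f}_\omega(\chi_\omega)$ appearing above is precisely the multiplicative discrete derivative $\overline{\Delta}^A_{P,k}(\widehat{F})$ evaluated at the character cube $P$ (the degree label $k$ is immaterial here, as the derivative formula \eqref{disc_der} does not depend on it once a point is fixed). Next, apply the Orthogonality Lemma \ref{lem:orth}: for a character cube $P$ of degree $k$, one has $\left\langle P\right\rangle_{A,l} = 1$ if $k + l < d$ and $0$ otherwise. Therefore the sum over all $P$ collapses to the sum over character cubes of degree $k \le d - l - 1$, i.e. over $P \in \mathcal{P}^{A,d-l-1}(\widehat{G})$ (using the filtration \eqref{Pfilter} for $\widehat{G}$), each contributing weight $1$. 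This yields
$$\left\langle F \right\rangle_{A,l} = \sum_{P \in \mathcal{P}^{A,d-l-1}(\widehat{G})} \overline{\Delta}^A_{P,d-l-1}(\widehat{F}),$$
which is exactly the definition \eqref{in_prod_hatG} of $\left\langle \widehat{F}\right\rangle_{A,d-l-1}$.

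The main point requiring care is the justification that $\left\langle P\right\rangle_{A,l}$ depends on $P$ only through its degree, together with the bookkeeping that "$\left\langle P\right\rangle_{A,l} \ne 0 \iff \deg P \le d-l-1 \iff P \in \mathcal{P}^{A,d-l-1}(\widehat{G})$" — this is immediate from Lemma \ref{lem:orth} and the filtration, but it is the conceptual heart of the identity. A minor technical point is the interchange of the finite character sums with the expectation $\mathbb{E}_{p \in \mathcal{P}^{A,l}(G)}$, which is unproblematic since $G$ (hence $\widehat{G}$ and $\mathcal{P}^{A,l}(G)$) is finite; for the infinite compact case one would invoke boundedness and continuity as in the preceding remark. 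I do not expect any serious obstacle: once the Orthogonality Lemma is in hand, the theorem is essentially a change-of-basis computation, which is presumably why the identity deserves the name "Parseval-type."
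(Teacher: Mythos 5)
Your proposal is correct and follows essentially the same route as the paper's own proof: expand each $f_\omega$ in characters, use multilinearity to pull the sum over character cubes $P$ outside the average, apply the Orthogonality Lemma to collapse the sum to $P \in \mathcal{P}^{A,d-l-1}(\widehat{G})$, and recognize the remaining sum as $\langle \widehat{F}\rangle_{A,d-l-1}$. Your side remarks on the degree bookkeeping and the interchange of sum and expectation match the paper's treatment as well.
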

\begin{proof}
For every $\omega \in \{0,1\}^A$, express $f_\omega : G \to \mathbb{C}$ as $f_\omega =\sum_{\xi \in \widehat{G}} \widehat{f}_{\omega, \xi} \chi_\xi$. By the multilinearity of the $2^d$-ary inner product of degree $l$ for the $A$-cube of functions $F = \left( f_\omega\right)$,  and rearranging sums and averages, we have 
$$ \left\langle F \right\rangle_{A,l} = \mathbb{E}_{p \in \mathcal{P}^{A,l}(G)}  \prod_{\omega \in \{0,1\}^A} \mathcal{C}^{|\omega|} f_\omega (x_\omega) =$$
$$ = \mathbb{E}_{p \in \mathcal{P}^{A,l}(G)}  \prod_{\omega \in \{0,1\}^A} \mathcal{C}^{|\omega|} \left( \sum_{\xi \in \widehat{G}} \widehat{f}_{\omega, \xi} \chi_\xi(x_\omega) \right) =$$
$$= \mathbb{E}_{p \in \mathcal{P}^{A,l}(G)} \sum_{ P \in {\widehat{G}}^{\{0,1\}^A}}   \prod_{\omega \in \{0,1\}^A} \mathcal{C}^{|\omega|} \widehat{f}_{\omega, P_\omega} \chi_\omega(x_\omega) =$$
$$=\sum_{P \in {\widehat{G}}^{\{0,1\}^A}} \mathbb{E}_{p \in \mathcal{P}^{A,l}(G)}
\prod_{\omega \in \{0,1\}^A} \mathcal{C}^{|\omega|} \widehat{f}_{\omega, P_\omega} \chi_\omega(x_\omega)=
$$
$$=\sum_{P \in {\widehat{G}}^{\{0,1\}^A}} 
\left(\prod_{\omega \in \{0,1\}^A} \mathcal{C}^{|\omega|} \widehat{f}_{\omega, P_\omega}\right)\mathbb{E}_{p \in \mathcal{P}^{A,l}(G)} \prod_{\omega \in \{0,1\}^A} \mathcal{C}^{|\omega|} \chi_\omega(x_\omega)=
$$
$$ \sum_{P \in {\widehat{G}}^{\{0,1\}^A}} \left(\prod_{\omega \in \{0,1\}^A} \mathcal{C}^{|\omega|}\widehat{f}_{\omega, P_\omega}\right)  \left\langle P \right\rangle_{A,l}  \overset{\substack{\text{Lemma} \\ \text{\ref{lem:orth}}}}= 
\sum_{P \in \mathcal{P}^{A,d-l-1}(\widehat{G})} \left(\prod_{\omega \in \{0,1\}^A} \mathcal{C}^{|\omega|}\widehat{f}_{\omega, P_\omega}\right)  $$
$$ = \sum_{P \in \mathcal{P}^{A,d-l-1}(\widehat{G})} \overline{\Delta}^A_{P, d-l-1} (\widehat{F}) =
 \left\langle \widehat{F} \right\rangle_{A,d-l-1}.$$
\end{proof}

\begin{corollary} For a finite abelian group $G$, the Fourier transform defines an isometry between the normed spaces  $(\mathbb{C}^G, \|\cdot\|_{U^{d,l}(G)})$ and  $(\mathbb{C}^{\widehat{G}},\|\cdot\|_{U^{d,d-l-1}(\widehat{G})})$.    \qed 
\end{corollary}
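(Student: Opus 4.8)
The plan is to obtain the corollary as an immediate consequence of Theorem \ref{thm:f=fhat}, specialized to the diagonal (constant) cube of functions. First I would fix a label set $A$ with $|A|=d$ and a degree $l\in\{0,1,\dots,d-1\}$, and take an arbitrary $f\in\mathbb{C}^G$. Setting $F=(f)_{\omega\in\{0,1\}^A}$, the Fourier transform of this cube is again a constant cube, $\widehat F=(\widehat f)_{\omega\in\{0,1\}^A}$, since $\widehat{(f_\omega)}=\widehat f_\omega$ holds trivially when all $f_\omega$ coincide. Theorem \ref{thm:f=fhat} then gives
\[
\|f\|_{U^{A,l}(G)}^{2^d}=\left\langle F\right\rangle_{A,l}=\left\langle \widehat F\right\rangle_{A,d-l-1}=\|\widehat f\|_{U^{A,d-l-1}(\widehat G)}^{2^d}.
\]
Both sides are nonnegative real numbers by Corollary \ref{cor:Fsplit}(iv), so taking $2^d$-th roots yields $\|f\|_{U^{d,l}(G)}=\|\widehat f\|_{U^{d,d-l-1}(\widehat G)}$ for every $f$.

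Next I would record the structural facts that turn this pointwise identity into a statement about normed spaces. The Fourier transform $f\mapsto\widehat f$ is a linear bijection of $\mathbb{C}^G$ onto $\mathbb{C}^{\widehat G}$, its inverse being Fourier inversion; and by the Minkowski inequality proposition together with Corollary \ref{cor:Fsplit}(v), the functional $\|\cdot\|_{U^{d,l}}$ is a genuine norm on $\mathbb{C}^G$ for $0\le l\le d-1$ — here one uses that $G$ is finite, which is a standing assumption. Since $d-l-1$ again lies in the admissible range $\{0,\dots,d-1\}$, the functional $\|\cdot\|_{U^{d,d-l-1}}$ is likewise a norm on $\mathbb{C}^{\widehat G}$. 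Combining linearity, bijectivity, and the norm identity from the previous paragraph shows that the Fourier transform is an isometric isomorphism between $(\mathbb{C}^G,\|\cdot\|_{U^{d,l}(G)})$ and $(\mathbb{C}^{\widehat G},\|\cdot\|_{U^{d,d-l-1}(\widehat G)})$.

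I do not expect any genuine obstacle here: all the substance is already carried by Theorem \ref{thm:f=fhat}, and ultimately by the Orthogonality Lemma \ref{lem:orth}. The only points that merit a line of care are checking that the shifted degree $d-l-1$ stays in the range where the target functional is known to be a norm, and noting that passing to the Fourier transform commutes with forming the constant cube. Both are immediate, so the proof reduces to invoking Theorem \ref{thm:f=fhat} on constant cubes and extracting $2^d$-th roots.
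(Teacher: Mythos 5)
Your argument is correct and is exactly the (omitted) proof the paper intends: the corollary is stated with a \qed precisely because it follows by specializing Theorem \ref{thm:f=fhat} to the constant cube $F=(f)_{\omega}$, taking $2^d$-th roots, and invoking linearity and bijectivity of the Fourier transform together with the norm properties established earlier. Your added checks (nonnegativity via Corollary \ref{cor:Fsplit}(iv), the range condition on $d-l-1$) are the right ones and introduce nothing beyond what the paper's own development already supports.
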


\begin{remark}
    As a consequence of the identity relating the norms $U^{d,d-1}(G)$ and $U^{d,0}(\hat G)$ --- where the latter corresponds to the standard $L^{2^d}$-norm on $C^{\hat{G}}$ --- we obtain the following monotonicity of norms:
$$ \|f\|_{U^{d,d-1}} \ge \|f\|_{U^{d+1,d}}.$$ 
In fact, a more general inequality holds: 
$$ \|f\|_{U^{d,l-1}} \ge \|f\|_{U^{d+1,l}} \quad \text{for } 1 \le l \le  d-1, $$ 
and its proof will appear in a subsequent paper.
        
\end{remark}

\section{Higher Order Poisson Summation Formula and Corner Convolutions} %and Corner Convolutions
\label{sec:Poisson}

In this section, we outline an alternative, and arguably simpler, approach to proving Theorem \ref{thm:f=fhat}.

In recent literature on Brascamp-Lieb inequalities, one frequently  encounters multilinear integral formulas of the form 

\[ \int_H f_1 \otimes \dots \otimes f_m = \int_{H^\perp} \widehat{f}_1 \otimes \dots \otimes \widehat{f}_m,\]
where each $f_j : \mathbb{R}^{n_j} \to \mathbb{C}$ is sufficiently nice integrable functions and $H^\perp$ denotes the orthogonal complement of a subspace $H$ in $\mathbb{R}^{n_1} \times \dots \times \mathbb{R}^{n_m}$. 

This identity is referred to variously as \textit{Fourier invariance property} \cite{BenJeon20, BenBez22}, a \textit{generalized form of Parseval's identity} \cite{BenBez20}, or simply used without a special name as in \cite{DurThie21}, where a translated version is also discussed. In particular, \cite{BenJeon20} derives the isometry between the $U^{d,1}(G)$- and $U^{d,d-2}(\widehat{G})$-norms as a special case of this invariance for a finite abelian group $G$ and its dual $\widehat{G}$. 

In our context, we require minor modifications to incorporate complex conjugation and to appropriately adapt the notion of orthogonality, yielding what we call the \textit{higher order Poisson summation formula} (see \eqref{Poisson}). 

First, some additional notation:

Let $G$ be a finite abelian group, and let $B$ a finite index set with $|B|=k$. Fix a \textit{signature map} $\mathrm{sign}:B \to \{0,1\}$. Then $G^B$ is a finite abelian group consisting of elements $(x_j)_{j\in B}$, and its Pontryagin dual $\widehat{G^B}$ is isomorphic to ${(\widehat{G})^B}$, with elements $(\chi_j)_{j\in B}$. 

Given a subgroup of $H \le G^B$ define its {\it dual subgroup in ${\widehat{G}}^B$ with respect to signature} $\mathrm{sign}$ as 
$$ H^{\perp_{\mathrm{sign}}} \coloneqq \left\{ (\chi_j)_{j\in B} \in (\widehat{G})^B \middle| \prod_{j \in B}  \mathcal{C}^{\mathrm{sign}(j)} \chi_j(x_j) = 1 \, \text{ for all } \, (x_j)_{j \in B} \in H \right\} .$$
When $\mathrm{sign} \equiv 0$, this reduces to the standard dual subgroup  $H^\perp$.

The signature $\mathrm{sign}$ induces a map
$$ \mathrm{sign} : (\widehat{G})^B \to (\widehat{G})^B, \quad (\chi_j)_{j \in B} \mapsto \bigl( (-1)^{\mathrm{sign}(j)} \chi_j \bigr)_{j \in B}.$$

Here, we use additive notation for $\widehat{G}$.

Observe that $(\chi_j)_{j \in B} \in H^\perp$ if and only if $\mathrm{sign}(\chi_j)_{j \in B} \in H^{\perp_{\mathrm{sign}}}$.

An example to keep in mind is the following:  $B=\{0,1\}^A$, $\mathrm{sign}(\omega)= |\omega| \mod 2$. In this case, the Orthogonality lemma \ref{lem:orth} corresponds to the fact that $H^{\perp_{\mathrm{sign}}} = \mathcal{P}^{A,d - l -1}(\widehat{G})$ for $H= \mathcal{P}^{A,l}(G)$.

Recall from \eqref{gray} that the spaces $\mathcal{P}^{A,l}(G)$ were obtained as solution sets to systems of Gray code equations associated to $(l+1)$-dimensional faces of the cube $\{0,1\}^A$. Each such system involves integer coefficients and $2^d$ unknowns in $G$.  Hence, it can be identified with some vector $v \in \mathbb{Z}^{\{0,1\}^A}$ and an associated linear map 
$$ \delta_v : G^{\{0,1\}^A} \to G, \quad \delta_v((x_\omega)) \coloneqq \sum_{\omega \in \{0,1\}^A} v_\omega x_\omega.$$
Moreover, we the unique completion property holds: an element of $\mathcal{P}^{A,l}(G)$ is completely determined by any of its $l$-corners. This corresponds to the fact that the system's matrix can be reduced to echelon form with unit pivots, i.e. its  Smith normal form consisting solely of units.

In general, to keep things simple and avoid technicalities related to  torsions, consider now a lattice  $\Lambda \subseteq \mathbb{Z}^B$ whose basis can be reduced to echelon form with unit pivots. 
Define the corresponding subgroup 
$$\mathcal{P}^{\Lambda}(G) \coloneqq  \bigcap_{v \in \Lambda} \ker \delta_v \subseteq G^B.$$
On $\mathbb{Z}^B$ we have a signed inner product induced by  $\mathrm{sign}$:
$$ \langle v, w \rangle_{\mathrm{sign}} \coloneqq \sum_{j \in B} (-1)^{\mathrm{sign}(j)} v_j w_j.$$
For the lattice $\Lambda$, define its orthogonal complement with respect to this signed inner product as 
$$ \Lambda^{\perp_{\mathrm{sign}}} \coloneqq \left\{ w \in \mathbb{Z}^B \, \middle| \,      \langle v, w \rangle_{\mathrm{sign}} = 0, \text{ for all } v \in \Lambda \right\}.$$

The following proposition shows that the lattice $\Lambda^{\perp_{\mathrm{sign}}}$ generates in $\widehat{G}$ the dual group to $\mathcal{P}^{\Lambda}(G)$ with respect to the signature $\mathrm{sign}$. 

\begin{proposition} Let $G$ be a finite abelian group, $\Lambda \subseteq \mathbb{Z}^B$  a lattice with trivial Smith normal form, and $\mathrm{sign}$ a signature map.
Then the dual group of $\mathcal{P}^{\Lambda}(G)$ with respect to the signature $\mathrm{sign}$ is $\mathcal{P}^\Lambda(G)^{\perp_{\mathrm{sign}}} =  \mathcal{P}^{\Lambda^{\perp_{\mathrm{sign}}}}(\widehat{G})$.
\end{proposition}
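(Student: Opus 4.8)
The plan is to prove the identity $\mathcal{P}^\Lambda(G)^{\perp_{\mathrm{sign}}} = \mathcal{P}^{\Lambda^{\perp_{\mathrm{sign}}}}(\widehat{G})$ by a double inclusion, using the classical duality between a subgroup and its annihilator together with the hypothesis that $\Lambda$ has trivial Smith normal form. First I would record the two elementary facts that will do most of the work: (a) for any $v \in \mathbb{Z}^B$ and any $(x_j) \in G^B$, the character $(\chi_j) \mapsto \prod_{j\in B}\mathcal{C}^{\mathrm{sign}(j)}\chi_j(x_j)$ evaluated on the ``signed multiple'' of a character tuple behaves linearly, so that the pairing between $G^B$ and $(\widehat G)^B$ twisted by $\mathrm{sign}$ restricts to a perfect pairing; and (b) under this twisted pairing, $\delta_v((x_\omega)) = 0$ for all $(x_\omega) \in \mathcal{P}^\Lambda(G)$ is, after moving through the dual, exactly the condition that the character tuple generated by $w$ lies in $\mathcal{P}^\Lambda(G)^{\perp_{\mathrm{sign}}}$ whenever $\langle v,w\rangle_{\mathrm{sign}} = 0$. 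Concretely, the map $w \mapsto (\text{tuple built from } w)$ should be described explicitly: given $w \in \mathbb{Z}^B$ and a character $\psi \in \widehat G$, one forms $(\chi_j)_{j\in B}$ with $\chi_j = (-1)^{\mathrm{sign}(j)} w_j \psi$ (additive notation in $\widehat G$), and one checks $\prod_j \mathcal{C}^{\mathrm{sign}(j)}\chi_j(x_j) = \psi\!\left(\sum_j (-1)^{\mathrm{sign}(j)} \cdot (-1)^{\mathrm{sign}(j)} w_j x_j\right) = \psi\!\left(\sum_j w_j x_j\right) = \psi(\delta_w((x_j)))$, so this tuple annihilates $\mathcal{P}^\Lambda(G)$ precisely when $\delta_w$ kills $\mathcal{P}^\Lambda(G)$.

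For the inclusion $\mathcal{P}^{\Lambda^{\perp_{\mathrm{sign}}}}(\widehat G) \subseteq \mathcal{P}^\Lambda(G)^{\perp_{\mathrm{sign}}}$, I would take $(\chi_j) \in \bigcap_{w \in \Lambda^{\perp_{\mathrm{sign}}}} \ker \delta_w$ (the intersection now taken in $(\widehat G)^B$), and an arbitrary $(x_j) \in \mathcal{P}^\Lambda(G)$, and show $\prod_j \mathcal{C}^{\mathrm{sign}(j)}\chi_j(x_j) = 1$. Using the observation that $(\chi_j) \in H^\perp$ iff $\mathrm{sign}(\chi_j) \in H^{\perp_{\mathrm{sign}}}$ (already noted in the text), it suffices to transport everything to the \emph{unsigned} setting: the claim becomes the standard statement that $\mathcal{P}^\Lambda(G)^\perp = \mathcal{P}^{\Lambda^\perp}(\widehat G)$ for lattices, composed with the sign-twist automorphism $\mathrm{sign}$ of $(\widehat G)^B$, which carries $\Lambda^\perp$ to $\Lambda^{\perp_{\mathrm{sign}}}$. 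So really the whole proposition reduces, via the involution $\mathrm{sign}$, to the case $\mathrm{sign}\equiv 0$, and there it is the classical fact that for a subgroup $K = \bigcap_{v\in\Lambda}\ker\delta_v$ of $G^B$ one has $K^\perp = \{(\chi_j) : \prod_j \chi_j(x_j) = 1 \ \forall (x_j)\in K\}$ equal to the subgroup of $(\widehat G)^B$ generated in the evident way by $\Lambda$ — but to get equality (not just one inclusion) one needs $K$ to be ``saturated'', which is exactly where the trivial Smith normal form enters.

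The reverse inclusion $\mathcal{P}^\Lambda(G)^{\perp_{\mathrm{sign}}} \subseteq \mathcal{P}^{\Lambda^{\perp_{\mathrm{sign}}}}(\widehat G)$ is the direction where I expect the real content. Reducing again to $\mathrm{sign}\equiv 0$, I must show: if $(\chi_j)$ annihilates $\mathcal{P}^\Lambda(G)$, then $(\chi_j) \in \mathcal{P}^{\Lambda^\perp}(\widehat G)$, i.e. $\delta_w(\chi_j) = 1_{\widehat G}$ for every $w \in \Lambda^\perp$ — equivalently $\sum_j w_j \chi_j = 0$ in $\widehat G$ for all $w \in \Lambda^\perp$. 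By Pontryagin duality applied to the finite groups in play, $\mathcal{P}^\Lambda(G)^\perp = (G^B / \mathcal{P}^\Lambda(G))^{\widehat{\phantom{n}}}$, and since $\mathcal{P}^\Lambda(G) = \ker(\delta_\Lambda : G^B \to G^{\Lambda/(\text{relations})})$ one identifies the quotient with the image of $G^B$ under the matrix whose rows span $\Lambda$. The trivial-Smith-normal-form hypothesis guarantees (i) that $\mathbb{Z}^B/\Lambda$ is torsion-free, hence $\Lambda$ is a direct summand of $\mathbb{Z}^B$, so we may pick a complementary sublattice and a dual adapted basis; and (ii) that tensoring the exact sequence $0 \to \Lambda \to \mathbb{Z}^B \to \mathbb{Z}^B/\Lambda \to 0$ with $G$ (resp. $\widehat G$) stays exact, so that $\mathcal{P}^\Lambda(G)$, $G^B/\mathcal{P}^\Lambda(G)$, and their duals all have the ``expected'' sizes, forcing the a priori inclusion $\mathcal{P}^{\Lambda^\perp}(\widehat G) \subseteq \mathcal{P}^\Lambda(G)^\perp$ to be an equality by a counting/nondegeneracy argument. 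I would phrase the final step as: choose a $\mathbb{Z}$-basis of $\mathbb{Z}^B$ adapted to $\Lambda$ (possible by trivial Smith form), write both $\mathcal{P}^\Lambda(G)$ and $\mathcal{P}^{\Lambda^\perp}(\widehat G)$ in these coordinates as ``coordinate subgroups'', and observe that they are exact annihilators of each other under the standard pairing; then untwist by $\mathrm{sign}$ to conclude. The main obstacle is precisely handling the torsion in $G$: without the trivial Smith normal form, $\delta_v$ for a non-primitive $v$ can have a larger-than-expected kernel and the naive counting breaks, so the proof must invoke that hypothesis at exactly the point where it converts ``$\Lambda^{\perp\perp} = \Lambda$ over $\mathbb{Z}$'' into ``$\mathcal{P}^\Lambda$ and $\mathcal{P}^{\Lambda^\perp}$ are mutual annihilators over $G$''; I would make sure to isolate and prove that reduction as a clean sub-lemma rather than let it get buried in computation.
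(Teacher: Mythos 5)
Your plan is correct in substance and uses the same essential ingredients as the paper's proof, but it is organized rather differently. The paper never untwists the signature: it directly exhibits, for each basis vector $v^{(n)}$ of $\Lambda$ and each $\chi\in\widehat G$, the tuple $\mathrm{sign}(\chi v^{(n)})=\bigl((-1)^{\mathrm{sign}(j)}v^{(n)}_j\chi\bigr)_{j\in B}$, checks by the computation you also wrote down (namely $\prod_j\mathcal{C}^{\mathrm{sign}(j)}\chi_j(x_j)=\chi\bigl(\sum_j v^{(n)}_j x_j\bigr)$) that these tuples lie in \emph{both} $\mathcal{P}^{\Lambda^{\perp_{\mathrm{sign}}}}(\widehat G)$ and $\mathcal{P}^{\Lambda}(G)^{\perp_{\mathrm{sign}}}$, and then uses the trivial Smith normal form twice for a cardinality count: the subgroup these tuples generate is isomorphic to $\widehat G^{\,l}$, and $\mathcal{P}^{\Lambda}(G)^{\perp_{\mathrm{sign}}}$ is also isomorphic to $\widehat G^{\,l}$ because $\mathcal{P}^{\Lambda}(G)\cong G^{k-l}$. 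Your route --- reduce to $\mathrm{sign}\equiv 0$ via the sign-twist involution, pass to a $\mathbb{Z}$-basis adapted to $\Lambda$ (available exactly because of the trivial Smith form), and invoke classical annihilator duality plus counting --- buys a cleaner conceptual picture and makes transparent where saturation enters, at the cost of having to track the contragredient action on $(\widehat G)^B$ under the change of basis; the paper's version avoids any change of basis. One small correction to your framing: neither containment between $\mathcal{P}^{\Lambda^{\perp_{\mathrm{sign}}}}(\widehat G)$ and $\mathcal{P}^{\Lambda}(G)^{\perp_{\mathrm{sign}}}$ is genuinely ``easy'' on its own; what is immediate is only that the explicitly constructed subgroup sits inside both, and the hypothesis is needed to show it exhausts each (equivalently, in your language, that $(\ker\phi)^{\perp}=\operatorname{im}\widehat\phi$ identifies $\mathcal{P}^{\Lambda}(G)^{\perp_{\mathrm{sign}}}$ with that subgroup, while the count $|\mathcal{P}^{\Lambda^{\perp_{\mathrm{sign}}}}(\widehat G)|=|G|^{l}$ closes the other side). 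Since your third paragraph supplies exactly this counting step, the plan is complete.
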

\begin{proof} Let $v^{(1)}, \dots, v^{(l)}$ be a basis of $\Lambda$. Fix $\chi \in \widehat{G}$ and a basis vector $v^{(n)}= \left(v^{(n)}_j\right)_{j \in B}$. Consider the $k$-tuple $\mathrm{sign}(\chi v^{(n)}) = \left((-1)^{\mathrm{sign}(j)}v^{(n)}_j \chi\right)_{j \in B}$ in ${\widehat{G}}^B$. Observe that this $k$-tuple lies in $\mathcal{P}^{\Lambda^{\perp_{\mathrm{sign}}}}(\widehat{G})$, since for every $w \in \Lambda^{\perp_{\mathrm{sign}}}$ we have 
$$\delta_w(\mathrm{sign}(\chi v^{(n)}))= \sum_{j \in B}(-1)^{\mathrm{sign}(j)}w_j v^{(n)}_j \chi = 0 \chi = 1.$$
Since the vectors $v^{(1)}, \dots, v^{(l)}$ form a matrix that  can be reduced to echelon form with unit pivots, the subgroup generated by   $\mathrm{sign}(\chi v^{(n)})$'s in $\mathcal{P}^{\Lambda^{\perp_{\mathrm{sign}}}}(\widehat{G})$ isomorphic to ${\widehat{G}}^l$. 

The triviality of the Smith normal form of $\Lambda$ guarantees that $\mathcal{P}^{\Lambda}(G)$ is isomorphic to $G^{k-l}$,  and thus $\mathcal{P}^{\Lambda}(G)^{\perp_{\mathrm{sign}}}$ is isomorphic to ${\widehat{G}}^l$.

Therefore, it remains to verify that   $\mathrm{sign}(\chi v^{(n)}) \in \mathcal{P}^{\Lambda}(G)^{\perp_{\mathrm{sign}}}$. 

\noindent
Let $(x_j)_{j\in B} \in \mathcal{P}^{\Lambda}(G)$. In particular, we have $\delta_{v^{(n)}}(x_j) = 0$, therefore
$$\prod_{j \in B}  \mathcal{C}^{\mathrm{sign}(j)} \left((-1)^{\mathrm{sign}(j)}v^{(n)}_j \chi\right)(x_j) = \prod_{j \in B}  v^{(n)}_j \chi( x_j) =    \chi\left(\sum_{j \in B}v^{(n)}_j x_j\right)  = 1,$$
and the claim follows.
\end{proof}

% skip this example

\if{}

\begin{example} For $k=3$ consider a lattice $\Lambda=\text{span}[(1,-2,1)]$ and signature $\mathrm{sign} = (0,0,0)$. Then $\Lambda^\perp=\text{span}[(2,1,0), (-1,0,1)]$, 
$$\mathcal{P}^{\Lambda}(G) = \left\{ (x, y, z) \in G^3 \, \middle| \, x+z=2y \right\} = \left\{ (2y - z, y, z) \, \middle| \, y,z \in G \right\} =$$ $$= \left\{ (x, x+d, x+2d) \, \middle| \, x,d \in G \right\}, $$ 
and 
$$\mathcal{P}^{\Lambda^\perp}(\widehat{G}) =  \left\{ (r, -2r, r) \, \middle| \, r \in \widehat{G} \right\}.$$ 
The dual nature of $\mathcal{P}^{\Lambda}(G)$ and $\mathcal{P}^{\Lambda^\perp}(\widehat{G})$ then underlies Roth-type Fourier identity
$$  \mathbb{E}_{\substack{x+z=2y \\x,y,z\in G}} f_1(x)f_2(y)f_3(z) = \sum_{r \in \widehat{G}} \widehat{f}_1 (r) \widehat{f}_2 (-2r) \widehat{f}_3 (r).$$ 

For $k=4$ the choice of a lattice $\Lambda=\text{span}[(1,-1,-1,1)]$ and signature $\mathrm{sign} = (0,1,1,0)$ leads to the Gowers inner product of order 2, as:  
$$\Lambda^{\perp_{\mathrm{sign}}} = \text{span}[(1,0,0,-1), (0,1,0,-1), (0,0,1,-1)],$$
$$\mathcal{P}^{\Lambda}(G) = \left\{ (x, y, z, w) \in G^4 \, \middle| \, x-y-z+w =0 \right\} =$$ $$= \left\{ (x, x+a, x+b, x+a+b) \, \middle| \, x,a,b \in G \right\}, $$ 
$$\mathcal{P}^{\Lambda^{\perp_{\mathrm{sign}}}}(\widehat{G}) =  \left\{ (r, r, r, r) \, \middle| \, r \in \widehat{G} \right\}.$$ In this case, the duality gives the identity:
$$  \mathbb{E}_{x,a,b\in G} f_1(x)\overline{f_2(x+a) f_3(x+b)}f_4(x+a+b) = \sum_{r \in \widehat{G}} \widehat{f}_1 (r) \overline{\widehat{f}_2 (r) \widehat{f}_3 (r) }\widehat{f}_4(r).$$
\end{example}

\fi{}

After this preparation, we are in position to state and prove the higher order Poisson summation formula.
 
\begin{theorem}[Higher order Poisson summation formula]
\label{thm:Poisson}
Let $G$ be a finite abelian group, $B$ a finite index set with $|B|=k$, and $\mathrm{sign}: B \to \{0,1\}$ a signature map. For any $k$-tuple of complex-valued functions $(f_j)_{j \in B}$ on $G$, with Fourier transforms $(\widehat{f}_j)_{j \in B}$, and any subgroup $H \leq G^B$, we have:
\begin{equation}
\label{Poisson}
\mathbb{E}_{(x_j) \in H} \prod_{j \in B} \mathcal{C}^{\mathrm{sign}(j)} f_j(x_j) = \sum_{(\chi_j) \in H^{\perp_{\mathrm{sign}}}} \prod_{j \in B} \mathcal{C}^{\mathrm{sign}(j)} \widehat{f}_j(\chi_j).
\end{equation}
\end{theorem}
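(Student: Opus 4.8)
The plan is to expand the Fourier series of each $f_j$ and swap the order of summation and averaging, exactly as in the proof of Theorem~\ref{thm:f=fhat}, but now organized around the characters of the group $H$ rather than the spaces $\mathcal{P}^{A,l}(G)$. Concretely, I would write $f_j = \sum_{\xi \in \widehat{G}} \widehat{f}_j(\xi)\, \chi_\xi$ for each $j \in B$, substitute into the left-hand side of \eqref{Poisson}, and interchange the (finite) sum over $\widehat{G}^B$ with the average over $H$. This yields
$$\mathbb{E}_{(x_j)\in H} \prod_{j\in B}\mathcal{C}^{\mathrm{sign}(j)} f_j(x_j) = \sum_{(\chi_j)\in \widehat{G}^B}\left(\prod_{j\in B}\mathcal{C}^{\mathrm{sign}(j)}\widehat{f}_j(\chi_j)\right)\, \mathbb{E}_{(x_j)\in H}\prod_{j\in B}\mathcal{C}^{\mathrm{sign}(j)}\chi_j(x_j),$$
where I used that $\mathcal{C}$ distributes over products and that conjugating $\widehat{f}_j(\chi_j)\chi_j(x_j)$ factors as $\mathcal{C}^{\mathrm{sign}(j)}\widehat{f}_j(\chi_j)\cdot \mathcal{C}^{\mathrm{sign}(j)}\chi_j(x_j)$.

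The crux is then to evaluate the inner average $\mathbb{E}_{(x_j)\in H}\prod_{j\in B}\mathcal{C}^{\mathrm{sign}(j)}\chi_j(x_j)$. The map $(x_j)\mapsto \prod_{j\in B}\mathcal{C}^{\mathrm{sign}(j)}\chi_j(x_j) = \prod_{j\in B}\bigl((-1)^{\mathrm{sign}(j)}\chi_j\bigr)(x_j)$ is a character of the finite abelian group $H$ (in additive notation on $\widehat{G}$, the restriction to $H$ of the character $\mathrm{sign}(\chi_j)_{j\in B}$ of $G^B$). By the standard orthogonality of characters on the finite group $H$, this average is $1$ if that character is trivial on $H$ and $0$ otherwise. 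By the very definition of $H^{\perp_{\mathrm{sign}}}$, triviality on $H$ is precisely the condition $(\chi_j)_{j\in B}\in H^{\perp_{\mathrm{sign}}}$. Substituting this back collapses the sum over $\widehat{G}^B$ to a sum over $H^{\perp_{\mathrm{sign}}}$, and each surviving term contributes $\prod_{j\in B}\mathcal{C}^{\mathrm{sign}(j)}\widehat{f}_j(\chi_j)$, which is exactly the right-hand side of \eqref{Poisson}.

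I expect no serious obstacle here: the only points requiring care are bookkeeping ones, namely (i) checking that the conjugation operators $\mathcal{C}^{\mathrm{sign}(j)}$ pass through the finite linear combinations correctly, which holds because $\mathcal{C}$ is conjugate-linear and $\mathcal{C}^2=\mathrm{id}$, so $\mathcal{C}^{\mathrm{sign}(j)}\bigl(\sum_\xi \widehat{f}_j(\xi)\chi_\xi(x_j)\bigr) = \sum_\xi \mathcal{C}^{\mathrm{sign}(j)}\widehat{f}_j(\xi)\,\mathcal{C}^{\mathrm{sign}(j)}\chi_\xi(x_j)$; and (ii) the legitimacy of interchanging sum and average, which is immediate since everything is a finite sum. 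The orthogonality step itself is just the finite-group Parseval/orthogonality relation applied to the subgroup $H$. Note that this proof, unlike the one via Lemma~\ref{lem:orth}, does not require the unique-completion/Smith-normal-form hypotheses, and Theorem~\ref{thm:f=fhat} is recovered by the specialization $B=\{0,1\}^A$, $\mathrm{sign}(\omega)=|\omega|\bmod 2$, $H=\mathcal{P}^{A,l}(G)$, using the identification $H^{\perp_{\mathrm{sign}}}=\mathcal{P}^{A,d-l-1}(\widehat{G})$.
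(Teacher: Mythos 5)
Your proof is correct, but it takes a genuinely different route from the paper's. The paper packages the left-hand side as the average over $H$ of the single function $F\bigl((x_j)\bigr)=\prod_{j\in B}\mathcal{C}^{\mathrm{sign}(j)}f_j(x_j)$ on $G^B$, invokes the classical Poisson summation formula $\mathbb{E}_{x\in H}F(x)=\sum_{\chi\in H^{\perp}}\widehat{F}(\chi)$, computes $\widehat{F}$ via the factorization $\widehat{F}\bigl((\chi_j)\bigr)=\prod_{j\in B}\mathcal{C}^{\mathrm{sign}(j)}\widehat{f}_j\bigl((-1)^{\mathrm{sign}(j)}\chi_j\bigr)$, and then reindexes the sum using the observation that $(\chi_j)\in H^{\perp}$ if and only if $\bigl((-1)^{\mathrm{sign}(j)}\chi_j\bigr)\in H^{\perp_{\mathrm{sign}}}$. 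You instead transplant the method of the paper's proof of Theorem~\ref{thm:f=fhat}: expand each $f_j$ in characters, interchange the finite sums, and evaluate $\mathbb{E}_{(x_j)\in H}\prod_{j\in B}\mathcal{C}^{\mathrm{sign}(j)}\chi_j(x_j)$ by orthogonality of characters on $H$, the average being $1$ or $0$ according to whether the signed character is trivial on $H$, which is precisely the definition of $(\chi_j)\in H^{\perp_{\mathrm{sign}}}$. The two arguments are of course cousins --- the classical Poisson formula is itself proved by your orthogonality computation --- but yours is self-contained and sidesteps both the computation of $\widehat{F}$ and the bookkeeping of the $(-1)^{\mathrm{sign}(j)}$ twist in the dual variable, since the sign is absorbed directly into the definition of $H^{\perp_{\mathrm{sign}}}$; the paper's version is shorter if one takes the classical formula as known and makes the statement's provenance as a signed, tensored Poisson formula more transparent. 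Your closing remarks are also accurate: the Smith-normal-form hypothesis plays no role in the theorem itself (it enters only in the preceding proposition identifying $\mathcal{P}^{\Lambda}(G)^{\perp_{\mathrm{sign}}}$), and the specialization $B=\{0,1\}^A$, $\mathrm{sign}(\omega)=|\omega|\bmod 2$, $H=\mathcal{P}^{A,l}(G)$ together with Lemma~\ref{lem:orth} does recover Theorem~\ref{thm:f=fhat}, exactly as the paper indicates.
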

\begin{proof}
Consider the function
$$
F: G^B \to \mathbb{C}, \quad F\left((x_j)_{j \in B}\right) := \prod_{j \in B} \mathcal{C}^{\mathrm{sign}(j)} f_j(x_j).
$$
By the standard Poisson summation formula for the subgroup $H \leq G^B$, we know:
$$
\mathbb{E}_{x \in H} F(x) = \sum_{\chi \in H^\perp} \widehat{F}(\chi).
$$

Since \(F\) factors as a product of functions and characters on \(G^B\) factor as products of characters on \(G\), its Fourier transform factorizes as:
\[
\widehat{F}\left((\chi_j)_{j \in B}\right) = \prod_{j \in B} \mathcal{C}^{\mathrm{sign}(j)} \widehat{f}_j\left((-1)^{\mathrm{sign}(j)} \chi_j\right).
\]

On the other hand, by the definition of the dual subgroup with respect to  signature, \((\chi_j) \in H^\perp\) if and only if \(\big((-1)^{\mathrm{sign}(j)} \chi_j\big) \in H^{\perp_{\mathrm{sign}}}\)
and the claim of the theorem follows.
\end{proof}

\begin{corollary}
\label{cor:Poisson} Let $(f_j)_{j \in B}$ be a $k$-tuple of complex-valued functions on a finite abelian group $G$, $\Lambda \subseteq \mathbb{Z}^B$  a lattice with trivial Smith normal form, and $\mathrm{sign}$ a signature.
 Then
\begin{equation}
\label{PoissonLattice}
     \mathbb{E}_{(x_j) \in \mathcal{P}^\Lambda(G)}  \prod_{j \in B}  \mathcal{C}^{\mathrm{sign}(j)} f_j(x_j) = \sum_{(\chi_j) \in \mathcal{P}^{\Lambda^{\perp_{\mathrm{sign}}}}(\widehat{G})} \, \prod_{j \in B}  \mathcal{C}^{\mathrm{sign}(j)} \widehat{f}_j(\chi_j). 
\end{equation}
\qed 
\end{corollary}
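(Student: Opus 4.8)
The plan is to read off \eqref{PoissonLattice} from Theorem \ref{thm:Poisson} by a single substitution, once it is checked that the subgroup $H$ appearing there may be taken to be $\mathcal{P}^\Lambda(G)$. First I would recall that $\mathcal{P}^\Lambda(G) = \bigcap_{v \in \Lambda} \ker \delta_v$ is an intersection of kernels of the group homomorphisms $\delta_v \colon G^B \to G$, hence is itself a subgroup of $G^B$; this is exactly the hypothesis needed to invoke Theorem \ref{thm:Poisson}. Applying that theorem with $H = \mathcal{P}^\Lambda(G)$ gives immediately
$$\mathbb{E}_{(x_j) \in \mathcal{P}^\Lambda(G)} \prod_{j \in B} \mathcal{C}^{\mathrm{sign}(j)} f_j(x_j) = \sum_{(\chi_j) \in \mathcal{P}^\Lambda(G)^{\perp_{\mathrm{sign}}}} \prod_{j \in B} \mathcal{C}^{\mathrm{sign}(j)} \widehat{f}_j(\chi_j),$$
so the only remaining task is to recognize the index set of the sum on the right.

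This is precisely where the hypothesis that $\Lambda$ has trivial Smith normal form is used: the preceding Proposition, which identifies the dual group of $\mathcal{P}^\Lambda(G)$ with respect to the signature $\mathrm{sign}$, states that $\mathcal{P}^\Lambda(G)^{\perp_{\mathrm{sign}}} = \mathcal{P}^{\Lambda^{\perp_{\mathrm{sign}}}}(\widehat{G})$. Substituting this equality of subgroups of $\widehat{G}^B$ into the displayed identity yields exactly \eqref{PoissonLattice}, which completes the argument.

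I do not expect a genuine obstacle here, since every ingredient is already in place and the corollary is a direct specialization; the main point to be careful about is purely bookkeeping. One must ensure that the signed pairing $\langle\cdot,\cdot\rangle_{\mathrm{sign}}$ on $\mathbb{Z}^B$ and the associated map $\mathrm{sign}$ on $\widehat{G}^B$ are the same conventions appearing in the preceding Proposition, and that the averaging $\mathbb{E}$ over the finite group $\mathcal{P}^\Lambda(G)$ and the finite summation over $\mathcal{P}^{\Lambda^{\perp_{\mathrm{sign}}}}(\widehat{G})$ are normalized consistently with Theorem \ref{thm:Poisson}. As a sanity check I would also note that the special case $B = \{0,1\}^A$, $\mathrm{sign}(\omega) = |\omega| \bmod 2$, with $\Lambda$ the lattice of Gray-code relations attached to the $(l+1)$-dimensional faces of $\{0,1\}^A$, recovers Theorem \ref{thm:f=fhat}: the left-hand side of \eqref{PoissonLattice} becomes $\langle F \rangle_{A,l}$ and the right-hand side becomes $\langle \widehat{F} \rangle_{A,d-l-1}$.
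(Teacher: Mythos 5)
Your argument is correct and matches the paper's intent exactly: the paper leaves this corollary with only a \qed, treating it as the immediate combination of Theorem \ref{thm:Poisson} applied to the subgroup $H = \mathcal{P}^\Lambda(G)$ with the preceding Proposition's identification $\mathcal{P}^\Lambda(G)^{\perp_{\mathrm{sign}}} = \mathcal{P}^{\Lambda^{\perp_{\mathrm{sign}}}}(\widehat{G})$. Your additional checks (that $\mathcal{P}^\Lambda(G)$ is indeed a subgroup, and the sanity check recovering Theorem \ref{thm:f=fhat}) are consistent with the paper.
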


\begin{remark}
One might wonder which other lattices in $\mathbb{Z}^B$ lead to interesting product averages, particularly if one is interested in norms on $\mathbb{C}^G$ and arithmetic progressions. Numerical experiments suggest that a natural candidate, the inner product given by the degenerate cubic formula 
$$ \mathbb{E}_{x,a\in G} f_1(x)\overline{f_2(x+a) f_3(x+a) f_4(x+a)}f_5(x+2a)f_6(x+2a)f_7(x+2a)\overline{f_8(x+3a)} $$
is not positive definite and, thus, does not define a norm. 
\end{remark}

For the sake of completeness, we include an affine version of higher order Poisson formula, as well as its proof. It appeared in a less general form in \cite{Rak21}.

\begin{proposition}[Higher order Poisson summation formula -- an affine version]
\label{prop:PoissonAf}

Let $(f_j)_{j \in B}$ be a $k$-tuple of complex-valued functions on a finite abelian group $G$, $(\widehat{f}_j)_{j \in B}$ its Fourier transform, $\mathrm{sign}$ a signature, $H$ a subgroup of $G^B$ and $t \in G^B$ a translation. Then
\begin{equation}
\label{PoissonAf}
     \mathbb{E}_{(x_j) \in H + t }  \prod_{j \in B}  \mathcal{C}^{\mathrm{sign}(j)} f_j(x_j) = \sum_{(\chi_j) \in H^{\perp_{\mathrm{sign}}}} \, \prod_{j \in B}  \mathcal{C}^{\mathrm{sign}(j)} \left( \widehat{f}_j(\chi_j)\right) \chi_j((-1)^{\mathrm{sign}(j)} t_j) . 
\end{equation}
\end{proposition}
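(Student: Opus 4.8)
The plan is to reduce the affine statement \eqref{PoissonAf} to the already-established linear version \eqref{Poisson} (Theorem \ref{thm:Poisson}) by absorbing the translation $t$ into a modulation on the Fourier side. First I would observe that averaging $F$ over the coset $H+t$ is the same as averaging the translated function $T^t F$ over the subgroup $H$ itself: writing $F((x_j)_{j\in B}) \coloneqq \prod_{j\in B}\mathcal{C}^{\mathrm{sign}(j)}f_j(x_j)$ as in the proof of Theorem \ref{thm:Poisson}, we have
$$\mathbb{E}_{(x_j)\in H+t}F((x_j)) = \mathbb{E}_{(y_j)\in H}F((y_j + t_j)) = \mathbb{E}_{(y_j)\in H}(T^tF)((y_j)).$$
Now apply \eqref{Poisson} to the $k$-tuple of translated functions $g_j \coloneqq T^{t_j}f_j$, so that $g_j(x) = f_j(x+t_j)$. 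The left side is exactly what we want; on the right we get $\sum_{(\chi_j)\in H^{\perp_{\mathrm{sign}}}}\prod_{j\in B}\mathcal{C}^{\mathrm{sign}(j)}\widehat{g}_j(\chi_j)$, so it remains only to express $\widehat{g}_j$ in terms of $\widehat{f}_j$.

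The second step is the standard shift-modulation rule for the Fourier transform on $G$: translation by $t_j$ corresponds to multiplication by the character $\chi_j \mapsto \chi_j(t_j)$, i.e. $\widehat{g}_j(\chi_j) = \chi_j(t_j)\,\widehat{f}_j(\chi_j)$. The only subtlety is bookkeeping the complex conjugation: when $\mathrm{sign}(j)=1$ the factor appearing in \eqref{PoissonAf} is $\overline{\widehat{f}_j(\chi_j)}$, and applying $\mathcal{C}$ to the modulation factor turns $\chi_j(t_j)$ into $\overline{\chi_j(t_j)} = \chi_j(-t_j) = \chi_j((-1)^{\mathrm{sign}(j)}t_j)$; when $\mathrm{sign}(j)=0$ the factor is $\chi_j(t_j) = \chi_j((-1)^{\mathrm{sign}(j)}t_j)$. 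In both cases $\mathcal{C}^{\mathrm{sign}(j)}\widehat{g}_j(\chi_j) = \chi_j((-1)^{\mathrm{sign}(j)}t_j)\,\mathcal{C}^{\mathrm{sign}(j)}\widehat{f}_j(\chi_j)$, which is precisely the summand on the right-hand side of \eqref{PoissonAf}. Collecting the two steps yields the claimed identity.

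I do not expect any genuine obstacle here: the content is entirely in Theorem \ref{thm:Poisson}, and the affine refinement is a routine translation-to-modulation argument. The one point that requires a little care — and is the natural place for an off-by-sign error — is tracking how the complex-conjugation operator $\mathcal{C}^{\mathrm{sign}(j)}$ interacts with the modulation character, which is why the factor $\chi_j((-1)^{\mathrm{sign}(j)}t_j)$ (rather than simply $\chi_j(t_j)$) appears in the statement. Using additive notation for $\widehat{G}$ as elsewhere in the paper, $\chi_j((-1)^{\mathrm{sign}(j)}t_j)$ is just $(-1)^{\mathrm{sign}(j)}\chi_j$ evaluated at $t_j$, consistent with the modulation convention implicit in Theorem \ref{thm:Poisson}.
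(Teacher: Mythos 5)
Your argument is correct and coincides with the paper's own proof: both perform the change of variables $x = y + t$ to reduce to Theorem \ref{thm:Poisson} applied to the translated functions, and then use the shift--modulation rule together with the observation that $\mathcal{C}^{\mathrm{sign}(j)}\bigl(\widehat{f}_j(\chi_j)\chi_j(t_j)\bigr) = \mathcal{C}^{\mathrm{sign}(j)}\bigl(\widehat{f}_j(\chi_j)\bigr)\chi_j\bigl((-1)^{\mathrm{sign}(j)}t_j\bigr)$. Your explicit sign bookkeeping is exactly the step the paper summarizes as ``accounting for complex conjugation.''
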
 
\begin{proof} Using a change of variables $x= (x_j) = (y_j) + (t_j) = y + t$ for $x\in H+t$ and $y \in H$ the left-hand side becomes:
\[ \mathbb{E}_{(x_j) \in H + t }  \prod_{j \in B}  \mathcal{C}^{\mathrm{sign}(j)} f_j(x_j) =
 \mathbb{E}_{(y_j) \in H }  \prod_{j \in B}  \mathcal{C}^{\mathrm{sign}(j)} f_j(y_j + t_j).\]
Under Fourier transform, the translation in $f_j$ corresponds to a phase shift in $\widehat{f}_j$. Accounting for complex conjugation, we get directly from Theorem \eqref{thm:Poisson} :
\[\mathbb{E}_{(y_j) \in H }  \prod_{j \in B}  \mathcal{C}^{\mathrm{sign}(j)} f_j(y_j + t_j) = \sum_{(\chi_j) \in H^{\perp_{\mathrm{sign}}}} \, \prod_{j \in B}  \mathcal{C}^{\mathrm{sign}(j)} \left( \widehat{f}_j(\chi_j) \chi_j(t_j) \right) = \]
\[= \sum_{(\chi_j) \in H^{\perp_{\mathrm{sign}}}} \, \prod_{j \in B}  \mathcal{C}^{\mathrm{sign}(j)} \left( \widehat{f}_j(\chi_j)\right) \chi_j((-1)^{\mathrm{sign}(j)} t_j).\]
\end{proof}

% \section{Corner convolutions}
% \label{sec:convolutions}
%  not enough for a new section

 {\it Corner convolutions}, or more generally {\it $U^d$-convolutions}, were used implicitly by Host and Kra in \cite{HoKr04}, \cite{HoKr05},  and later studied explicitly by Szegedy in \cite{Szeg10d}, \cite{Szeg12}, and by Candela and Szegedy in \cite{CanSze18}. In these works,  the $2^d$-ary inner product $\langle F \rangle_{d,1}$ is expressed as an ordinary inner product of between the function $f_0$, corresponding to corner $0\dots 0$,  and a composite  function $\dot{K}(F')$, where $F'$ denotes a cube of functions $F$ with $f_0$ removed:
$$ \left\langle f_0, F' \right\rangle_{d,1}  = \left\langle f_0, \overline{\dot{K}(F')} \right\rangle_{1,0}.$$
The corner convolution $\dot{K}(F')$ is given by averaging over $h_1, \dots , h_d \in G$:
$$ \dot{K}(F')(x) \coloneqq \mathbb{E}_{h_1, \dots , h_d} \prod_{\omega \neq 0} \mathcal{C^{|\omega|}}f_\omega(x + \sum_i \omega_i h_i) .$$
Recall that in \eqref{DiffP(A,l)} we described a splitting $\mathcal{P}^{d,1}(G) = \mathcal{P}^{d,0}(G) \times \mathcal{D_0}\mathcal{P}^{d,1}(G)$, so  a $d$-tuple of parameters $h_1, \dots, h_d$ determines a unique point $r = (y_\omega) \in \mathcal{D}_0\mathcal{P}^{d,1}(G)$ with $y_\omega = \sum_i \omega_i h_i$. This is a $d$-dimensional parallelepiped in $G$ with $y_{0\dots 0} = 0$, generated by sides $h_1, \dots, h_d$. To obtain all parallelepipeds in $\mathcal{P}^{d,1}(G)$, we need to shift $r$ by a constant cube $p=(x_\omega)\in \mathcal{P}^{d,0}(G)$, i.e.,
$x_\omega = x_{0\dots0}$ for all $\omega \in \{0,1\}^A$.

If we modify our notation of points and multiplicative derivatives slightly with an added dot to reflect the removal of the corner $0 \dots 0$ from an $A$-cube, we get
$$ \dot{K}(F')(x_{0\dots0}) = \mathbb{E}_{r \in  \mathcal{D}_0\mathcal{P}^{d,1}(G)} \overline{\Delta}^A_{\dot{p}+\dot{r},1} (F') = \mathbb{E}_{r \in  \mathcal{D}_0\mathcal{P}^{d,1}(G)} \prod_{\substack{\omega \in \{0,1\}^A\\ \omega \neq  0\dots0 }} \mathcal{C}^{|\omega|} f_\omega (x_{0\dots0} + y_\omega).$$

The same can be done for the cube of Fourier transforms $\hat{F}$ and the $(d,d-2)$-inner product $\langle \hat{F} \rangle_{d,d-2}$. Here we use the splitting  from \eqref{Diff2P(A,l)}: $\mathcal{P}^{d,d-2}(\hat{G}) = \mathcal{P}^{d,0}(\hat{G}) \times \mathcal{D}_0\mathcal{P}^{d,d-2}(\hat{G})$, i.e., each degree $d-2$ cube of characters can be shifted by a constant cube of characters $P= (\chi_\omega)$ to $R=(\rho_\omega)$ of degree $d-2$ with $\rho_{0\dots0}=1 \in \hat{G}$. Such $R$ is then uniquely determined by values $\rho_\omega$ for $1\le |\omega|\le d-2$. Defining a convolution as an average of multiplicative derivatives over such $R$'s, we get
$$\dot{K}(\hat{F}) (\chi_{0\dots0}) \coloneqq  \sum_{R \in  \mathcal{D}_0\mathcal{P}^{d,d-2}(\hat{G})} \overline{\Delta}^A_{\dot{P}+\dot{R},d-2} (\hat{F}') = $$
$$ = \sum_{R \in  \mathcal{D}_0\mathcal{P}^{d,d-2}(\hat{G})} \prod_{\substack{\omega \in \{0,1\}^A \\ \omega \neq 0\dots0} } \mathcal{C}^{|\omega|} \hat{f}_\omega (\chi_{0\dots0} + \rho_\omega).$$

Using Parseval's identity and the duality of $(d,l)$-inner products, we obtain
$$ \langle \hat{f_0}, \widehat{\overline{\dot{K}(F')}} \rangle_{1,0} = \langle f_0, \overline{\dot{K}(F')} \rangle_{1,0} = \langle f_0, F' \rangle_{d,1}  = \langle \hat{f_0}, \hat{F}' \rangle_{d,d-2}  = \langle \hat{f_0}, \overline{\dot{K}(\hat{F}')} \rangle_{1,0},$$
giving a Fourier identity between corner convolutions $\dot{K}(F')$ and $\dot{K}(\hat{F}')$:
\begin{equation}
    \label{Corner}
    \widehat{\dot{K}(F')}   (\chi_{0\dots0}) = \dot{K}(\hat{F}) (\chi_{0\dots0}) =\sum_{R \in  \mathcal{D}_0\mathcal{P}^{d,d-2}(\hat{G})} \prod_{\substack{\omega \in \{0,1\}^A\\ \omega \neq  \{0\dots0\} }} \mathcal{C}^{|\omega|} \hat{f}_\omega (\chi_{0\dots0} + \rho_\omega).
\end{equation}

We would obtain the same result using the affine Poisson summation formula \eqref{PoissonAf}, checking along the way that the signed dual to $\dot{\mathcal{D}}_{0}\mathcal{P}^{d,1}(G)$ inside $\dot{G}^{\{0,1\}^A}$ is precisely $\dot{\mathcal{D}}_{0}\mathcal{P}^{d,d-2}(\hat{G})$ in $\dot{\hat{G}}^{\{0,1\}^A }$.

The same argument works for $l\ge 2$, and we get that the Fourier transform of $U^{d,l}$-convolution of $F$ is the $U^{d,d-l-1}$-convolution of its dual $\hat{F}$.

\bibliographystyle{amsplain}

\end{document}